\definecolor{lightblue}{rgb}{0.678, 0.847, 1.0} 
\newtheorem{thm}{Theorem}[section]
\newtheorem*{thm*}{Theorem}
\newtheorem{lemma}[thm]{Lemma}
\newtheorem*{lemma*}{Lemma}
\newtheorem{Observation}[thm]{Observation}
\newtheorem*{rem*}{Remark}
\theoremstyle{definition}
\newtheorem{case}{Case}
\newtheorem{subcase}{Subcase}[case]
\newtheoremstyle{customlemstyle}{3pt}{3pt}{\itshape}{}{\bfseries}{.}{1em}{}
\theoremstyle{customlemstyle}
\newenvironment{customlemma}[1]{%
	\par\noindent\textbf{Lemma #1.}\itshape
}{\par}
\tikzset{global scale/.style={
		scale=#1,
		every node/.append style={scale=#1}}}
\begin{document}
\title{Online Ramsey numbers of the claw versus cycles}
\author[1,2]{Hexuan Zhi}
\author[,1,2]{Yanbo Zhang\thanks{Corresponding author.}}
\affil[1]{School of Mathematical Sciences\\ Hebei Normal University\\ Shijiazhuang 050024, China}
\affil[2]{Hebei Research Center of the Basic Discipline Pure Mathematics\\ Shijiazhuang 050024, China}

\date{}
\maketitle
\let\thefootnote\relax\footnotetext{\emph{Email addresses:} {\tt hxzhi.edu@outlook.com} (H. Zhi), {\tt ybzhang@hebtu.edu.cn} (Y. Zhang)}

\begin{quote}
{\bf Abstract:} The online Ramsey number $\tilde r(G,H)$ is defined via a Builder--Painter game on an empty graph with countably many vertices. In each round, Builder reveals an edge, which Painter immediately colors either red or blue. Builder wins once a red copy of $G$ or a blue copy of $H$ appears, and $\tilde r(G,H)$ is the minimum number of edges Builder must reveal to force a win.

For a long cycle $C_\ell$, the online Ramsey numbers $\tilde r(G,C_\ell)$ are known only for a few specific choices of $G$. 
In particular, exact values were determined for $G=P_3$ by Cyman, Dzido, Lapinskas, and Lo (Electron.\ J.\ Combin., 2015), while asymptotically tight results were obtained when $G$ is an even cycle by Adamski, Bednarska-Bzd\c{e}ga, and Bla\v{z}ej (SIAM J.\ Discrete Math., 2024). 
In this paper, we consider the case where $G$ is the claw $K_{1,3}$ and determine the exact value of $\tilde r(K_{1,3},C_\ell)$. 
We show that
\[
\tilde r(K_{1,3},C_\ell)=\left\lfloor \frac{3(\ell+1)}{2} \right\rfloor 
\quad \text{for all } \ell \ge 13.
\]

{\bf Keywords:} Ramsey number, online Ramsey number, claw, cycles

{\bf 2020 MSC:} 05C55, 05C57, 05D10
\end{quote}

\section{Introduction}
Online Ramsey numbers were introduced by Beck~\cite{Beck1993}, and the name originates from the work of Kurek and Ruci\'{n}ski~\cite{Kurek2005}. Given two simple graphs $G$ and $H$, we consider a two-player game between \emph{Builder} and \emph{Painter}, called the \emph{$(G,H)$--online Ramsey game}. The game starts with an edgeless graph on countably many vertices. In each round, Builder selects two currently non-adjacent vertices and exposes the edge between them, and Painter immediately colors this edge either red or blue. Builder aims to force, in as few rounds as possible, the appearance of a red copy of $G$ or a blue copy of $H$, while Painter aims to delay this outcome for as long as possible. The \emph{online Ramsey number} $\tilde r(G,H)$ is the minimum number of rounds (equivalently, the minimum number of exposed edges) that Builder needs to guarantee a win.

For online Ramsey numbers involving complete graphs, Conlon~\cite{Conlon2010} established a general upper bound for $\tilde r(n,n)$. For lower bounds, Beck~\cite{Beck1993} gave an elegant argument showing that $\tilde r(n,n)\ge r(n,n)/2$. More recently, Conlon, Fox, Grinshpun, and He~\cite{Conlon2019} improved this bound by proving $\tilde r(n,n)\ge 2^{(2-\sqrt2)n+O(1)}$.

In contrast to the complete-graph case, the study of online Ramsey numbers for sparse graphs has been particularly active, especially for paths, stars, trees, and cycles; see, for instance,
\cite{Adamska2025,Adamski2024a,Adamski2024b,Adamski2024c,Bednarska2024,Cyman2015,Dybizbanski2020,Grytczuk2008,Latip2021,Mond2024,Pralat2008,Pralat2012,Song2025,Zhang2023}.
However, results concerning \emph{long} cycles in this direction remain rather limited, and exact values are known only in very few cases. This makes it particularly meaningful to determine precise online Ramsey numbers when the avoided graph is a long cycle.

Concerning long cycles, Adamski, Bednarska-Bzd\c{e}ga, and Bla\v{z}ej~\cite{Adamski2024c} proved that for all large $n$ and fixed $k$,
\[
\tilde r(C_k,C_n)=2n+\mathcal{O}(k)\quad \text{if $k$ is even,}
\]
while
\[
\tilde r(C_k,C_n)\le 3n+o(n)\quad \text{if $k$ is odd.}
\]
Cyman, Dzido, Lapinskas, and Lo~\cite{Cyman2015} determined that for $\ell\ge 5$,
\[
\tilde r(P_3,C_\ell)=\left\lceil \frac{5\ell}{4}\right\rceil.
\]
Beyond these, relatively few results are known for online Ramsey numbers involving long cycles. Song, Wang, and Zhang~\cite{Song2025} showed that for $\ell\ge 2$,
\[
\tilde r(K_{1,3},P_\ell)=\left\lfloor \frac{3\ell}{2}\right\rfloor.
\]
In this paper, we extend their work by determining the exact value of $\tilde r(K_{1,3},C_\ell)$. Our main result is the following.

\begin{thm}\label{thm:mainresult}
	For $\ell\ge 13$, we have
	\[
	\tilde{r}(K_{1,3}, C_\ell)= \left\lfloor\frac{3(\ell+1)}{2}\right\rfloor.
	\]
\end{thm}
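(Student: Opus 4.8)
The plan is to prove matching upper and lower bounds on $\tilde r(K_{1,3},C_\ell)$, writing $N=\lfloor 3(\ell+1)/2\rfloor$ throughout. The central observation is that avoiding a red $K_{1,3}$ forces Painter to keep the red subgraph of maximum degree at most $2$, so at every moment the red edges form a disjoint union of paths and cycles, and a vertex of red degree $2$ is \emph{saturated}: any further edge at it must be blue. Builder's only leverage is therefore the ability to force a blue edge by drawing it at a saturated vertex, while Painter's only way to delay a blue $C_\ell$ is to answer red whenever she can do so without saturation violations. Both bounds are really about how efficiently Builder can convert saturation into a blue cycle of length \emph{exactly} $\ell$. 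As a sanity check, $N$ coincides with $\tilde r(K_{1,3},P_{\ell+1})$ from the quoted result of Song, Wang, and Zhang; this is consistent with the fact that a blue $C_\ell$ and a blue $P_{\ell+1}$ both consist of $\ell$ edges, so the difference between the two targets is purely structural, and that structure is exactly where the difficulty lies.

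\textbf{Upper bound.} I would first isolate a path-growing gadget that extends a blue path at an amortised cost of $3/2$ edges per blue edge: starting from the current endpoint I draw edges to fresh vertices, exploiting the fact that after Painter answers red twice at a vertex that vertex is saturated and the next edge there is forced blue. By reusing the vertices that received Painter's red answers as targets of later extensions, the two red edges spent are shared between two advances of the path rather than wasted, which is what produces the $3:2$ ratio. Iterating this gadget, Builder forces a blue path $v_0v_1\cdots v_{\ell-1}$ on exactly $\ell$ vertices (Builder simply stops once the desired length is reached, so Painter's occasional free blue answers only help), and arranges along the way that the initial endpoint $v_0$ becomes saturated. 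The final move is the closing edge $v_0v_{\ell-1}$, joining the two ends at path-distance $\ell-1$, which is forced blue because $v_0$ is saturated, completing a blue $C_\ell$. A careful tally of the $\ell$ blue edges together with the red edges spent during the gadget and the saturation of $v_0$ gives total cost at most $N$, with the parity of $\ell$ accounting for the floor.

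\textbf{Lower bound.} Here I would design a Painter strategy surviving $N-1$ edges. Painter answers red greedily subject to keeping every red degree at most $2$, with one crucial exception: she never saturates a vertex that is currently an endpoint of a long blue path, and she answers red rather than blue on any edge whose blue colour would complete a cycle of length exactly $\ell$. Keeping the endpoints of near-critical blue paths \emph{open} (red degree at most $1$) guarantees that Painter can always recolour a threatened closing edge red, so she is never forced into a blue $C_\ell$ prematurely. To bound the number of rounds from below I would use a charging/potential argument: assign to the coloured graph a weight that counts each blue edge once and pays for Painter's red edges, show that each round raises this weight by at most the appropriate amount, and verify that Builder cannot have produced a blue $C_\ell$ until the weight, and hence the number of edges, reaches $N$. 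The hypothesis $\ell\ge 13$ enters to guarantee enough room for the open-endpoint bookkeeping and to absorb boundary effects in the charging.

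\textbf{Main obstacle.} I expect the genuinely delicate point to be forcing the cycle length to be \emph{exactly} $\ell$ rather than merely long, in both directions. On Builder's side this means making the closing edge forced blue while the two endpoints sit at path-distance precisely $\ell-1$, and doing so even when Painter spends her answers to keep those endpoints unsaturated; on Painter's side it means maintaining, against every Builder move, the invariant that some endpoint of each dangerous blue path stays open so that the unique closing edge can be blocked in red. Pinning the constant down to the exact floor $\lfloor 3(\ell+1)/2\rfloor$, with its parity-dependent value, is where the two analyses must be made to meet, and I would treat the tight constant-and-parity accounting as the hardest and most error-prone part of the write-up.
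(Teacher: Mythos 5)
Your overall framing is right — the red graph has maximum degree at most $2$, blue edges are ``forced'' only at red-saturated vertices, and the whole problem is an efficiency count — and your lower-bound intuition (each blue edge must be chargeable to red saturation) is exactly the paper's counting argument. But both halves of your plan have concrete gaps at precisely the points where the work lies.

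For the lower bound, your Painter is ``red greedily subject to red degree at most $2$,'' plus two ad hoc exceptions. This strategy loses. A greedy Painter who permits red triangles hands Builder three saturated vertices for only three red edges, whereas a red path with $k$ edges yields only $k-1$ saturated vertices; quantitatively, Builder can build a red triangle $abc$ (3 edges), then thread a blue path $y_1ay_2by_3cy_4$ through it (6 forced blue edges), and repeat, obtaining $\ell$ forced blue edges of a cycle in roughly $3\ell/2\le N-1$ rounds. The paper's Painter therefore refuses not only red $K_{1,3}$'s but also red cycles of length at most $\lfloor(\ell+1)/2\rfloor$; this single extra clause is essential and is what makes the clean count ``blue edges $\le 2|X|+s$, red edges $=|X|+s$'' go through. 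Your two exceptions (keep endpoints of ``long'' blue paths open; answer red on any $C_\ell$-closing edge) are not shown to be simultaneously maintainable — Builder can arrange that a closing edge meets a vertex already saturated before its path became ``long'' — and no potential function is actually defined, so the lower bound is not proved.

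For the upper bound, the plan ``force a blue $P_\ell$ with a saturated endpoint $v_0$, then close with the single edge $v_0v_{\ell-1}$'' does not survive the accounting. Since $\tilde r(K_{1,3},P_\ell)=\lfloor 3\ell/2\rfloor$ is tight, for even $\ell$ you have exactly one round to spare beyond building the $P_\ell$, so the saturation of an endpoint must come for free — but Painter can simply answer blue on every edge Builder plays at the current endpoint of the long blue path, which extends the path without ever saturating its end. Overcoming this is the actual content of the paper's upper bound: it never closes a $P_\ell$ directly, but instead builds a \emph{better} or \emph{best} $P_{\ell-1}$ (endpoints of red degree $1$ and $2$) and closes through one or two fresh vertices in $2$--$3$ extra rounds (\cref{lem:main1,lem:main2}), with the Type~1--3 configurations and the wavy-path bookkeeping handling the cases where Painter's blue answers overshoot or land in the wrong place. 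Your ``amortised $3/2$ gadget'' and ``careful tally'' defer exactly this case analysis, which is where the floor $\lfloor 3(\ell+1)/2\rfloor$ and the parity dependence are actually earned.
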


To determine $\tilde r(K_{1,3},C_\ell)$, we give an explicit winning strategy for Builder. Starting from three small forced configurations, we apply case-specific routines that extend a controlled blue path structure until it can be closed to a blue $C_\ell$ within the claimed number of rounds.

We prove the lower bound in \cref{sec2}. In \cref{sec3--1} we introduce several technical lemmas and key structures, whose proofs are deferred to \cref{sec4}; the upper bound is established in \cref{sec3--2}.

\section{The Lower Bound for \cref{thm:mainresult}}\label{sec2}

To establish the lower bound, we construct a strategy for Painter which guarantees that after
$\left\lfloor \frac{3(\ell+1)}{2}\right\rfloor-1$ rounds, Builder still cannot force a blue copy of $C_\ell$.

Throughout the game, Painter colors the exposed edge red unless doing so would create either a red $K_{1,3}$ or a red cycle $C_k$ with $3\le k\le \left\lfloor\frac{\ell+1}{2}\right\rfloor$.
More precisely, let $R_i$ denote the graph formed by all red edges before round $i$, and let $e_i$ be the edge chosen by Builder in round $i$. If $R_i+e_i$ contains a copy of $K_{1,3}$ or a cycle of length at most $\left\lfloor\frac{\ell+1}{2}\right\rfloor$, then Painter colors $e_i$ blue; otherwise, Painter colors $e_i$ red.

Let $G$, $R$, and $B$ be the graphs induced by all edges, all red edges, and all blue edges, respectively, at the end of round
$\left\lfloor \frac{3(\ell+1)}{2}\right\rfloor-1$.
We will show that $G$ contains neither a red $K_{1,3}$ nor a blue $C_\ell$.

If $R$ contains at least $\left\lfloor\frac{\ell+1}{2}\right\rfloor+1$ edges, then $B$ contains at most
\[
\left\lfloor \frac{3(\ell+1)}{2}\right\rfloor-1-\left(\left\lfloor\frac{\ell+1}{2}\right\rfloor+1\right)
\le \ell-1
\]
edges, and hence $B$ cannot contain $C_\ell$. Therefore, we may assume that $R$ has at most $\left\lfloor\frac{\ell+1}{2}\right\rfloor$ edges. By Painter's strategy, $R$ contains no cycles and has maximum degree at most $2$, and thus $R$ is a disjoint union of paths.

Let $s$ be the number of connected components of $R$. Let $X$ be the set of vertices of degree $2$ in $R$, and for $1\le i\le s$ let $\{y_{2i-1},y_{2i}\}$ denote the two endpoints of the $i$th path component. Then $R$ has exactly $|X|+s$ edges, and this number is at most $\left\lfloor\frac{\ell+1}{2}\right\rfloor$. Hence
\begin{equation}\label{sec2equ1}
	|X|+s\le \left\lfloor\frac{\ell+1}{2}\right\rfloor.
\end{equation}

Suppose that $G$ contains a blue copy of $C_\ell$. Since every vertex of $C_\ell$ has degree $2$, the cycle can use at most $2|X|$ edges incident with vertices in $X$. Recall Painter's coloring rule: every blue edge is ``forced'', in the sense that it is colored blue only to avoid creating a red $K_{1,3}$ or a short red cycle. Equivalently, each blue edge is either incident with a vertex in $X$, or it is an edge joining the two endpoints of some path component, namely $y_{2i-1}y_{2i}$. Therefore, the total number of blue edges is at most $2|X|+s$, and thus
\begin{equation}\label{sec2equ2}
	2|X|+s\ge \ell.
\end{equation}
Combining this with~\cref{sec2equ1}, we further obtain
\begin{equation}\label{sec2equ3}
	2|X|+s\le \ell+1-s.
\end{equation}

If $s\ge 2$, then by~\cref{sec2equ2,sec2equ3} we obtain
\[
\ell \le 2|X|+s \le \ell+1-s \le \ell-1,
\]
a contradiction.

It remains to consider the case $s=1$. Then~\cref{sec2equ2,sec2equ3} imply that $2|X|+s=\ell$, and hence
$|X|+s=\left\lfloor\frac{\ell+1}{2}\right\rfloor$. Consequently, $R$ is a single path on $\left\lfloor\frac{\ell+1}{2}\right\rfloor$ edges.
If Builder joins the two endpoints of this path, then by Painter's rule the new edge is colored red, creating a red cycle of length
$\left\lfloor\frac{\ell+1}{2}\right\rfloor+1$. As discussed above, this forces at most $\ell-1$ blue edges in total.
If instead Builder never joins the two endpoints of $R$, then every blue edge must be incident with a degree-$2$ vertex of $R$, and therefore there are at most $2|X|=\ell-1$ blue edges. In either case, the blue graph cannot contain a copy of $C_\ell$. \qed

\section{The Upper Bound for \cref{thm:mainresult}}\label{sec3}
\subsection{Overview and technical preparation}\label{sec3--1}
We first list all lemmas needed for the proof of the upper bound. To keep the argument transparent, we will use these lemmas as black boxes, postponing their complete proofs to~\cref{sec4}.

Let $N=\left\lfloor \frac{3(\ell+1)}{2}\right\rfloor$. Assuming that no red $K_{1,3}$ ever appears during the game, it suffices to show that Builder can force a blue copy of $C_\ell$ within $N$ rounds.

We call a vertex \emph{good} if it is incident with at least one red edge, and \emph{better} if it is incident with two red edges. A blue path whose two endpoints are a good vertex and a better vertex is called a \emph{better path}; if both endpoints are better vertices, we call it a \emph{best path}.

\begin{lemma}\label{lem:main1}
Let $m\ge 2$ and $\ell-m\ge 2$. Suppose Builder has constructed a blue $P_m$ within $k$ rounds and that one of the following holds:
\begin{itemize}
	\item $P_m$ is a \emph{better} path and $2k\le 3m-3$;
	\item $P_m$ is a \emph{best} path and $2k\le 3m-1$.
\end{itemize}
Then Builder can force a blue $C_\ell$ within a total of $N$ rounds.
\end{lemma}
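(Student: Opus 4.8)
The plan is to iterate a single extension routine, using the two hypotheses of the lemma as the two states of that routine, and to close the cycle once the blue path reaches exactly $\ell$ vertices; I would organise this as an induction on the gap $\ell-m$. Throughout, the governing principle is that, since no red $K_{1,3}$ is allowed to appear, every vertex carries red-degree at most $2$, so any edge Builder draws at a \emph{better} endpoint is necessarily painted blue. This is the only forcing tool available, and the whole argument is an accounting of how cheaply Builder can keep an endpoint better while the path grows.

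The core is an extension routine that lengthens the blue $P_m$ while preserving the governing inequality $2k\le 3m-c$ (with $c=3$ for a \emph{better} path and $c=1$ for a \emph{best} path). Starting from a better endpoint $v$, Builder first draws $vw_1$ to a fresh vertex; this edge is forced blue and lengthens the path, but leaves the new endpoint $w_1$ with red-degree $0$. Builder then spends one or two further rounds at $w_1$ (and at a second fresh vertex) to rebuild red-degree there, and I would carry this out as a case analysis on Painter's responses: each edge painted red raises the red-degree of the new endpoint toward \emph{better}, while each edge painted blue lengthens the path further but postpones the rebuilding. The crucial bookkeeping is that the other endpoints of the red edges Builder draws become red-degree-$1$ vertices that can be recycled as the better vertices of later extensions; amortised along a growing red path, each new better vertex then costs essentially one red edge, so that two path-vertices are added per three rounds — exactly the ratio $3/2$ demanded by $N=\lfloor 3(\ell+1)/2\rfloor$. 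The conversion between the two states, namely promoting a good endpoint to a better one by one additional red edge, changes $m$ by $0$ and $k$ by $1$, which shifts the constant from $c=3$ to $c=1$ precisely as the two cases require (indeed $2(k+1)\le (3m-3)+2=3m-1$).

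For the base case I would stop extending once the path is a best $P_\ell$ — exactly $\ell$ vertices, both endpoints better — and close it with a single forced-blue edge between the two endpoints, producing a blue $C_\ell$. A short calculation shows that the best-path bound $2k\le 3\ell-1$ together with this one closing edge keeps the total within $N$ for both parities of $\ell$. The freedom $\ell-m\ge 2$ is what gives Builder room to insert a single-vertex extension when parity would otherwise force the path to overshoot $\ell$.

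The main obstacle is the case analysis that keeps the invariant $2k\le 3m-c$ intact under \emph{every} sequence of Painter's colourings — in particular the branches in which Painter repeatedly declines to create red-degree at the new endpoint, so that Builder must lengthen the path without immediately re-establishing a better endpoint. Making the amortisation rigorous (charging the reused red-degree-$1$ vertices correctly and never double-counting a red edge) is where the real work lies, which is presumably why the detailed routines are deferred to \cref{sec4}. A secondary but genuine difficulty is guaranteeing that Builder can land on exactly $\ell$ vertices with a better endpoint, so that the final edge is simultaneously forced blue and produces a cycle of the correct length.
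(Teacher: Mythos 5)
Your route is genuinely different from the paper's, and as sketched it has a real gap. The paper does not grow the existing $P_m$ at all: it invokes the known value $\tilde r(K_{1,3},P_{\ell-m})=\lfloor 3(\ell-m)/2\rfloor$ (\cref{thm:Song2025}) as a black box to force a \emph{vertex-disjoint} blue $P_{\ell-m}$ in $\lfloor 3(\ell-m)/2\rfloor$ further rounds, and then joins the two paths into a $C_\ell$ using $3$ forced edges in the better case (probe both ends of the new path from the good endpoint, then close at the better endpoint) or $2$ in the best case; the count $\lfloor 3(\ell-m)/2\rfloor+k+3\le\lfloor(3\ell+3)/2\rfloor=N$ then follows from $2k\le 3m-3$ by floor additivity. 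This is why the lemma's proof in \cref{sec4} is only a few lines. Your plan instead re-derives the path machinery in place, which is both harder and, in the form you describe, does not work.

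Concretely, the extension step you propose fails to preserve the invariant $2k\le 3m-3$. Extending from the better endpoint by one forced-blue edge costs one round and one vertex, but leaves a new endpoint of red-degree $0$, so the path is no longer better. In the worst case Painter answers your one or two rebuilding probes at the new endpoint with red, so restoring a better path costs two further rounds with no growth: three rounds for one new vertex, and the required inequality $2(k+3)\le 3(m+1)-3$ would need $2k\le 3m-6$, strictly stronger than what you started with. The slack degrades by $3$ per iteration, so the naive loop cannot close. The amortisation you gesture at (growing a red path whose internal vertices are all better and stitching it with forced-blue edges at ratio $3t+1$ rounds per $2t+1$ vertices) is exactly the content of \cref{thm:Song2025}, which you would be re-proving from scratch; and even granting it, your base case requires landing on a best $P_\ell$ with \emph{exactly} $\ell$ vertices, whereas in that routine Painter's red responses dictate that each extension adds an odd number $2t+1$ of vertices with $t$ of Painter's choosing, so $\ell-m\ge 2$ does not by itself give Builder control of the landing point. (The paper sidesteps this entirely: the disjoint complement path has exactly the right length by construction, and the separate \cref{lem:main2} closes a $P_{\ell-1}$ through a new apex vertex.)
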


\begin{lemma}\label{lem:main2}
Suppose Builder has constructed a blue $P_{\ell-1}$ within $k$ rounds.
\begin{itemize}
	\item If $P_{\ell-1}$ is a better path, then Builder can force a blue $C_\ell$ within a total of $k+3$ rounds.
	\item If $P_{\ell-1}$ is a best path, then Builder can force a blue $C_\ell$ within a total of $k+2$ rounds.
\end{itemize}
\end{lemma}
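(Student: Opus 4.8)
The plan is to exploit the defining feature of better and best paths. Since we operate under the standing assumption that no red $K_{1,3}$ ever appears, any edge Builder exposes at a \emph{better} vertex (one already incident with two red edges) is forced to be colored blue, for a red color would complete a red $K_{1,3}$. Thus each better endpoint of our path acts as a ``blue magnet'' that Builder can use to attach a fresh vertex and close the path into a cycle. Throughout, write the given path as $P_{\ell-1}=v_1v_2\cdots v_{\ell-1}$; since it has $\ell-1$ vertices while a copy of $C_\ell$ has $\ell$ vertices, Builder will introduce exactly one brand-new vertex $w$ and add the two edges joining $w$ to the two endpoints, producing the cycle $v_1v_2\cdots v_{\ell-1}wv_1$ on $\ell$ vertices.

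I would first settle the \emph{best} case, which is immediate. Both endpoints $v_1$ and $v_{\ell-1}$ are better, so Builder takes a new vertex $w$ and exposes $v_1w$ and then $v_{\ell-1}w$. Each of these edges is incident with a better vertex, hence forced blue, and together with $P_{\ell-1}$ they form a blue $C_\ell$. This uses exactly two additional edges, for a total of $k+2$ rounds, as claimed.

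For the \emph{better} case, say $v_1$ is better and $v_{\ell-1}$ is good. Builder first takes a new vertex $w$ and exposes the edge $v_{\ell-1}w$ at the \emph{good} endpoint. Two outcomes are possible. If Painter colors $v_{\ell-1}w$ blue, then $v_1v_2\cdots v_{\ell-1}w$ is a blue $P_\ell$ one of whose endpoints, $v_1$, is still better; Builder closes it by exposing $wv_1$, which is forced blue, giving a blue $C_\ell$ in two additional rounds. If instead Painter colors $v_{\ell-1}w$ red, then $v_{\ell-1}$ now carries a second red edge and has itself become a better vertex, so the original path has been upgraded to a \emph{best} path at the cost of a single edge; applying the best-case routine above (with a fresh new vertex) then closes it to a blue $C_\ell$ in two further rounds, for a total of three additional edges. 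In both branches Builder wins within $k+3$ rounds.

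The argument is short, and the only points requiring care are bookkeeping ones: one must check that the new vertices are genuinely fresh (so the exposed edges are legal and the closing produces a cycle on exactly $\ell$ vertices), that exposing an edge at $v_1$ leaves its red degree---and hence its status as a better vertex---unchanged, and that in the red branch the edge $v_{\ell-1}w$ cannot itself create a red $K_{1,3}$, since it only raises the red degrees of $v_{\ell-1}$ and $w$ to $2$ and $1$ respectively. The one genuinely clever step---really the heart of this endgame lemma---is the observation that a ``wasted'' red move by Painter at the good endpoint is not wasted at all: it converts a better path into a best path, after which the cheaper best-case routine applies.
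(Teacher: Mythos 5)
Your proof is correct and matches the paper's argument in all essentials: both cases rest on the observation that any edge exposed at a better vertex is forced blue (else a red $K_{1,3}$ appears), and your best-case routine is identical to the paper's. The only difference lies in how the good endpoint is handled in the better case: the paper exposes edges from it to \emph{two} fresh vertices at once, so that at least one must be blue, and then closes through the better endpoint, whereas you expose a single edge and note that a red reply upgrades the good endpoint to a better one, reducing to the best case; both variants cost at most three extra rounds, so the bound $k+3$ is met either way.
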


\begin{lemma}\label{lem:betterpl-2}
For $\ell\ge 4$, if Builder has already constructed a better $P_{\ell-2}$, then Builder can create a blue $C_{\ell}$ within five additional rounds.
\end{lemma}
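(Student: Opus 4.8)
The plan is to treat the given better $P_{\ell-2}$ as a blue path with a designated \emph{good} endpoint $a$ and \emph{better} endpoint $b$, and to exploit the fact that any edge Builder draws at a better vertex is forced blue (a third red edge would complete a red $K_{1,3}$). Since the path is \emph{better} rather than \emph{best}, the endpoint $a$ has red degree exactly $1$, while $b$ has red degree $2$, so every new edge at $b$ is automatically blue. Rather than trying to close the cycle by hand, I would aim to spend two rounds turning the better $P_{\ell-2}$ into a \emph{better} (or better still, finished) $P_{\ell-1}$ and then invoke \cref{lem:main2}, which closes a better $P_{\ell-1}$ in three further rounds, for a total of $2+3=5$. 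The opening move in every branch is the same: extend the path at the $a$-end by revealing an edge $ac$ to a fresh vertex $c$, and then branch on Painter's response.

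First I would handle the case where Painter colors $ac$ blue. Then $c-a-\cdots-b$ is a blue path on $\ell-1$ vertices whose endpoint $b$ is still better. I reveal a second edge $cd$ to a fresh vertex $d$. If $cd$ is blue, the path $d-c-a-\cdots-b$ is a blue $P_\ell$ with a better endpoint $b$, so the closing edge $db$ is forced blue and produces $C_\ell$ in just three rounds. If instead $cd$ is red, then $c$ acquires red degree $1$ and becomes good, so $c-a-\cdots-b$ is a better $P_{\ell-1}$ (good endpoint $c$, better endpoint $b$); two rounds have been spent, and \cref{lem:main2} finishes in three more, for a total of five. The complementary case is when Painter colors the very first edge $ac$ red: now $a$ has red degree $2$ and becomes better, making $a-\cdots-b$ a \emph{best} $P_{\ell-2}$, while $c$ is a fresh good vertex. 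Here the key move is to reuse $c$: the forced blue edge $bc$ (legal since $b$ is better) extends the path to $a-\cdots-b-c$, a better $P_{\ell-1}$ with endpoints $a$ (better) and $c$ (good). Again two additional rounds have been used, and \cref{lem:main2} closes the cycle in three more, totaling five.

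The hard part, and the reason the budget of five is tight, is the unforced ``middle'' edge that would naively arise if one tried to bridge $a$ to $b$ with two brand-new vertices: the connecting edge between two fresh vertices can be colored red by Painter, and a doubly adversarial response then threatens to overshoot the five-round allowance. The trick that resolves this is to never leave a freshly created endpoint stranded, but instead to convert each red answer into structural progress: a red answer at $ac$ upgrades $a$ to a better vertex and simultaneously hands Builder a good vertex $c$ that a single forced edge at $b$ turns into a better $P_{\ell-1}$, while a red answer at $cd$ upgrades $c$ to a good vertex and directly yields a better $P_{\ell-1}$. In this way every Painter response either completes $C_\ell$ outright within three rounds or reaches a better $P_{\ell-1}$ after exactly two rounds, so that the reduction to \cref{lem:main2} always lands within the claimed five additional rounds.
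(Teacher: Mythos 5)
Your proof is correct, and every branch lands within the five-round budget: the blue--blue branch closes $C_\ell$ in three rounds outright, and the other two branches each reach a better $P_{\ell-1}$ after exactly two rounds, after which \cref{lem:main2} legitimately supplies the remaining three. The underlying moves are the same as the paper's (probe the good endpoint with a fresh vertex, let a red answer upgrade that endpoint to better, and exploit the fact that every edge at a better vertex is forced blue); the difference is purely organizational. The paper's proof of \cref{lem:betterpl-2} is self-contained: in its blue case it spends three probe edges at the new vertex and closes directly, and in its red case it reattaches the red-probed vertex to the better endpoint and finishes the cycle by hand in four or five rounds. You instead convert each non-winning Painter response into a better $P_{\ell-1}$ and delegate the endgame to \cref{lem:main2}, which makes the bookkeeping cleaner and avoids re-deriving the ``two probes at a good endpoint, one must be blue'' argument; the paper's version buys a self-contained proof and, in one sub-case, a four-round finish, but both are equally valid for the stated bound. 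One small point of care: your claim that the good endpoint $a$ has red degree \emph{exactly} one is the worst case rather than a guaranteed fact (a better vertex is also good), but if $a$ already had two red edges then $ac$ would be forced blue and you would simply land in your Case A, so the argument is unaffected.
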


Next we introduce the notions of \emph{Type~1}, \emph{Type~2}, \emph{Type~3}, and the \emph{wavy path}. We will discuss properties of these configurations, which in certain situations will serve as building blocks for constructing a blue cycle.

\noindent\textbf{Types.}
Let $m\ge 3$. We define three classes of configurations, called \emph{Type~1}, \emph{Type~2}, and \emph{Type~3}. Suppose Builder has already constructed a blue path $P_m$ with endpoints $v_1$ and $v_m$; we refer to this as the \emph{blue main path}. Starting from this blue main path, we attach one of three local gadgets at the endpoint $v_m$, thereby obtaining the configurations of Type~1, Type~2, and Type~3. See \cref{fig:type1,fig:type2,fig:type3} for illustrations.
\begin{itemize}
	\item \textbf{Type 1:} Attach a triangle at $v_m$ as follows: the edges $v_mx$ and $xy$ are blue, the edge $v_mz$ is red, and in addition there is a red edge $yz$.
	\item \textbf{Type 2:} Add two red edges incident to $v_m$, namely $v_mx$ and $v_my$.
	\item \textbf{Type 3:} Attach a triangle at $v_m$ such that $v_mx$ is blue, while $v_my$ and $xy$ are red.
\end{itemize}

\begin{figure}[ht]
	\centering
	\definecolor{darkgreen}{rgb}{0.0, 0.5, 0.0} 
	\begin{subfigure}{0.3\textwidth}
		\centering
		\begin{tikzpicture}[scale=1.5, 
			vertex/.style={circle, draw, fill=white, inner sep=1.5pt},
			every label/.style={below, scale=1, black}]
			
			\node[vertex, label=below:$v_1$] (v1) at (0, 0) {};
			\node[vertex, label=below:$v_{m}$] (vm) at (2, 0) {};
			\node[vertex, label=below:$x$] (x) at (2.25, 0.5) {};
			\node[vertex, label=below:$y$] (y) at (2.5, 0) {};
			\node[vertex, label=below:$z$] (z) at (3, 0) {};
			\node[ label=below:$P_m$] () at (1, 0.75) {};
			
			\draw[line width=0.5mm, blue, bend left] (v1) to (vm);
			\draw[line width=0.5mm, red] (vm) -- (y);
			\draw[line width=0.5mm, blue] (vm) -- (x);
			\draw[line width=0.5mm, blue] (y) -- (x);
			\draw[line width=0.5mm, red ] (y) -- (z);
		\end{tikzpicture}
		\caption{Type 1}
		\label{fig:type1}
	\end{subfigure}
	\begin{subfigure}{0.3\textwidth} 
		\centering
		\begin{tikzpicture}[scale=1.5, 
			vertex/.style={circle, draw, fill=white, inner sep=1.5pt},
			every label/.style={below, scale=1, black}]
			
			\node[vertex, label=below:$v_1$] (v1) at (0, 0) {};
			\node[vertex, label=below:$v_m$] (vm) at (2, 0) {};
			\node[vertex, label=below:$x$] (x) at (2.25, 0.5) {};
			\node[vertex, label=below:$y$] (y) at (2.5, 0) {};
			\node[ label=below:$P_m$] () at (1, 0.75) {};
			
			\draw[line width=0.5mm, blue, bend left] (v1) to (vm);
			\draw[line width=0.5mm, red] (vm) -- (y);
			\draw[line width=0.5mm, red] (vm) -- (x);			
		\end{tikzpicture}
		\caption{Type 2}
		\label{fig:type2}
	\end{subfigure}
	\begin{subfigure}{0.3\textwidth} 
		\begin{tikzpicture}[scale=1.5, 
			vertex/.style={circle, draw, fill=white, inner sep=1.5pt},
			every label/.style={below, scale=1, black}]
			
			\node[vertex, label=below:$v_1$] (v1) at (0, 0) {};
			\node[vertex, label=below:$v_m$] (vm) at (2, 0) {};
			\node[vertex, label=below:$x$] (x) at (2.25, 0.5) {};
			\node[vertex, label=below:$y$] (y) at (2.5, 0) {};
			\node[ label=below:$P_m$] () at (1, 0.75) {};
			
			\draw[line width=0.5mm, blue, bend left] (v1) to (vm);
			\draw[line width=0.5mm, red] (vm) -- (y);
			\draw[line width=0.5mm, blue] (vm) -- (x);
			\draw[line width=0.5mm, red] (x) -- (y);
			
		\end{tikzpicture}
		\caption{Type 3}
		\label{fig:type3}
	\end{subfigure}
	\caption{}
		\label{fig:type}
\end{figure}

\begin{lemma}\label{lem:type}
Assume that $k$ and $m$ satisfy $3m-2k\ge 5$ and $\ell-m\ge 6$.
If Builder constructs a Type~1 configuration within $k+4$ rounds, or a Type~2 configuration within $k+2$ rounds, or a Type~3 configuration within $k+3$ rounds, then in each case Builder can force a blue $C_\ell$ within a total of $N$ rounds.
\end{lemma}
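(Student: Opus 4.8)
The plan is to reduce each of the three cases to \cref{lem:main1} by exhibiting a blue \emph{best} path (both endpoints better vertices) that is both long enough and cheap enough to invoke that lemma. The engine is the \emph{forced-blue principle}: once a vertex carries two red edges, any further edge Builder draws at it must be coloured blue, since a third red edge would create a red $K_{1,3}$ and we are in the regime where that ends the game in Builder's favour. Throughout I use that the far endpoint $v_1$ of the blue main path is already a better vertex (an invariant of the construction producing these configurations; note that \textbf{Type 2} below cannot be closed otherwise, which fixes this as the operative hypothesis). I also rewrite the efficiency assumption as $2k\le 3m-5$.

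\textbf{Type 2} is immediate and tight. The red edges $v_mx,v_my$ make $v_m$ better, so the blue main path $v_1\cdots v_m$ is already a best path on $m$ vertices, built in $k+2$ rounds. The best-path inequality $2(k+2)\le 3m-1$ is exactly $3m-2k\ge5$, and $\ell-m\ge2$, so \cref{lem:main1} finishes within $N$ rounds. \textbf{Type 1} is nearly as easy: the blue edges $v_mx,xy$ prolong the path to $v_1\cdots v_mxy$ on $m+2$ vertices, while the red edges $v_my,yz$ make the new endpoint $y$ better, giving a best path built in $k+4$ rounds. The requirement $2(k+4)\le 3(m+2)-1$ reduces to $3m-2k\ge3$, which the hypothesis supplies, and $\ell-(m+2)\ge2$ holds since $\ell-m\ge6$.

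\textbf{Type 3} is the delicate case and the main obstacle. The blue edge $v_mx$ gives a blue path $v_1\cdots v_mx$ on $m+1$ vertices, but its endpoint $x$ has only the single red edge $xy$, so it is merely \emph{good}; the path is only a better path, and the direct bound $2(k+3)\le 3(m+1)-3$ would need $3m-2k\ge6$, one more than we have. To recover the missing unit I exploit the spare better vertex $y$ (red-adjacent to both $v_m$ and $x$). Builder probes with a fresh edge $xw$. If Painter colours $xw$ blue, the path becomes $v_1\cdots v_mxw$, and the forced-blue edge $wy$ appends $y$, producing a best path on $m+3$ vertices in $k+5$ rounds, where $2(k+5)\le 3(m+3)-1$ reduces to $3m-2k\ge2$. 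If Painter colours $xw$ red, then $x$ becomes better, so the forced-blue edges $xu$ and $uy$ (fresh $u$) append $y$ through $x$, giving a best path on $m+3$ vertices in $k+6$ rounds, where $2(k+6)\le 3(m+3)-1$ reduces to $3m-2k\ge4$. In both branches $\ell-(m+3)\ge2$ holds since $\ell-m\ge6$, so \cref{lem:main1} closes the cycle within $N$ rounds.

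The routine cases (Types 1 and 2) are pure bookkeeping: identify the prolonged blue path, confirm its free endpoint is a better vertex, and match the round count against \cref{lem:main1}. The crux is Type 3, where the naive reduction is exactly one round too expensive; the entire purpose of the spare better vertex $y$ built into that gadget is to let Builder append one guaranteed-blue vertex, and the split on Painter's reply to the single probe $xw$ is precisely where the $3m-2k\ge5$ slack is consumed. The care I would take is in verifying that in each branch the appended edges really are forced blue (Painter has no red escape avoiding a red $K_{1,3}$) and that the result is genuinely a best path, so that \cref{lem:main1} may legitimately be quoted.
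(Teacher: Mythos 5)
Your argument rests on an extra hypothesis that the lemma does not grant you: that the far endpoint $v_1$ of the blue main path is already a better vertex. You justify this by saying Type~2 ``cannot be closed otherwise,'' but that is not so, and the assumption actually fails in the lemma's applications. For instance, in the proof of \cref{thm:G1} a Type~2 configuration is reached whose blue main path is the initial all-blue $P_3$ from $G_1$; there $v_1$ is incident with no red edge at all, so it is not even a good vertex, and your ``best path on $m$ vertices in $k+2$ rounds'' does not exist. The same defect propagates to your Type~1 and Type~3 arguments, both of which terminate the constructed path at $v_1$ and declare it a best path. Without the assumption, Type~2 would at best yield a better path, requiring $2(k+2)\le 3m-3$, i.e.\ $3m-2k\ge 7$, which the hypothesis $3m-2k\ge 5$ does not supply --- so the gap is not cosmetic.

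The paper's proof closes exactly this gap by making no assumption on $v_1$ and instead spending rounds either to place a better vertex at the new end of the path or to upgrade a vertex whenever Painter answers with red. For Type~3 it simply exposes $yv_1$, which is forced blue because $y$ already carries the two red edges $v_my$ and $xy$; the resulting $P_{m+2}$ has the better vertex $y$ and the good vertex $x$ as its endpoints, and $2(k+4)\le 3(m+2)-3$ is exactly $3m-2k\ge 5$. For Type~2 it probes $xv_1$ and then $xy$ or $yv_1$, branching on Painter's replies (each red reply turns $v_1$, $x$, or $y$ into a better vertex and forces subsequent blue edges); for Type~1 it probes $zv_1$. If you want to keep your structure, you must replace every appeal to ``$v_1$ is better'' by such a branching step at the $v_1$-end of the path; as written, the proof does not establish the lemma as stated.
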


The lemma above shows that once Builder has produced a \emph{Type~1}, \emph{Type~2}, or \emph{Type~3} configuration, it suffices to focus on the blue main path $P_m$ created during the first $k$ rounds. If the inequalities $3m-2k\ge 5$ and $\ell-m\ge 6$ hold, then Builder can force a blue $C_\ell$ in at most $N$ rounds.

\medskip
\noindent\textbf{Wavy path.}
Let $i\ge 2$ and suppose that $i\ge 2j+1$. Consider a blue path $P_{i+j}$. If its edge set contains $j\ge 0$ pairwise vertex-disjoint subpaths of length~$3$ (i.e., copies of $P_3$), and moreover the two endpoints of each such $P_3$ are joined by a red edge, then we call the graph formed by this blue path together with these red edges a \emph{wavy path}, denoted by $P(i,j)$. In $P(i,j)$, each $P_3$ together with the red edge joining its endpoints forms a triangle; we call such a triangle a \emph{wavy triangle}. The unique vertex of a wavy triangle that is not incident with the red edge is called a \emph{wavy point}.

For later convenience, we adopt a standardized labeling convention. In $P(i,j)$, we label all wavy points (if any) consecutively by $u_1,\ldots,u_j$, and label the remaining vertices along the underlying path order by $v_1,\ldots,v_i$.

A schematic illustration of a general wavy path is given in~\cref{fig:wavypath}. In that figure, $P_m$ denotes a blue path with endpoints $v_1$ and $v_m$, and $P_n$ denotes a blue path with endpoints $v_{m+1}$ and $v_{m+n}$, where $m,n\ge 2$. We also present concrete examples of wavy paths, namely $P(4,1)$ and $P(4,0)$. It is easy to check that, up to isomorphism, there are exactly two possibilities for $P(4,1)$ and exactly one for $P(4,0)$; see~\cref{fig:P410}.

Next we record several basic properties of wavy paths. A straightforward counting shows that $P(i,j)$ has exactly $i+j$ vertices and $i+2j-1$ edges: among them, $i+j-1$ edges come from the blue path $P_{i+j}$, and the remaining $j$ edges are red. By definition, any two wavy triangles in $P(i,j)$ are vertex-disjoint, and each red edge belongs to a distinct wavy triangle. Consequently, $P(i,j)$ contains no red copy of $P_3$. When $j=0$, the graph $P(i,j)$ is simply a blue $P_i$.

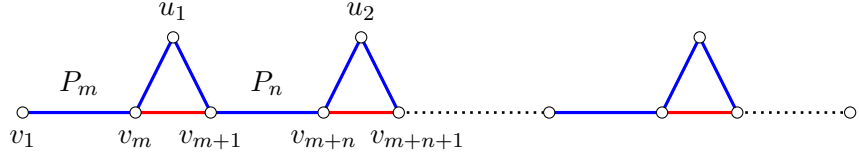
\begin{figure}[htp]
	\centering
	\begin{tikzpicture}[
		node/.style={circle, draw, fill=white, inner sep=1.5pt},
		bend edge/.style={blue, thick, bend left=20}
		]
		
		\coordinate (1) at (0,0);
		\coordinate (label1) at (0.75,0);
		\coordinate (2) at (1.5,0);
		\coordinate (3) at (2,1);
		\coordinate (4) at (2.5,0);
		\coordinate (label2) at (3.25,0);
		\coordinate (5) at (4,0);
		\coordinate (6) at (4.5,1);
		\coordinate (7) at (5,0);
		\coordinate (71) at (5.25,0);
		\coordinate (8) at (7,0);
		\coordinate (label3) at (7.75,0);
		\coordinate (9) at (8.5,0);
		\coordinate (10) at (9,1);
		\coordinate (11) at (9.5,0);
		\coordinate (12) at (11,0);
		
		\draw[blue, very thick] 
		(1) -- (2) -- (3) -- (4)-- (5)--(6)--(7);
		\draw[blue, very thick] 
		(8) -- (9) -- (10) -- (11);
			\draw[red,very thick] (2) -- (4) ;
		\draw[red,very thick] (5) --(7) ;
		\draw[red,very thick] (9) --(11) ;
		
		\begin{scope}[line width=1pt, black, dotted]
			\draw (7) to (8);
			\draw (11) to (12);
		\end{scope}

		\node[node] at (1) {};
		\node[node] at (2) {};
		\node[node] at (3) {};
		\node[node] at (4) {};
		\node[node] at (5) {};
		\node[node] at (6) {};
		\node[node] at (7) {};
		\node[node] at (8) {};
		\node[node] at (9) {};
		\node[node] at (10) {};
		\node[node] at (11) {};
		\node[node] at (12) {};

		\node[below=1mm of 1, label] {$v_1$};
		\node[above=1mm of label1, label] {$P_m$};
		\node[below=1mm of 2, label] {$v_m$};
		\node[above=1mm of 3, label] {$u_1$};
		\node[below=1mm of 4, label] {$v_{m+1}$};
		\node[above=1mm of label2, label] {$P_n$};
		\node[below=1mm of 5, label] {$v_{m+n}$};
		\node[above=1mm of 6, label] {$u_2$};
		\node[below=1mm of 71, label] {$v_{m+n+1}$};

	\end{tikzpicture}
	\caption{Wavy Path}
	\label{fig:wavypath}
\end{figure}

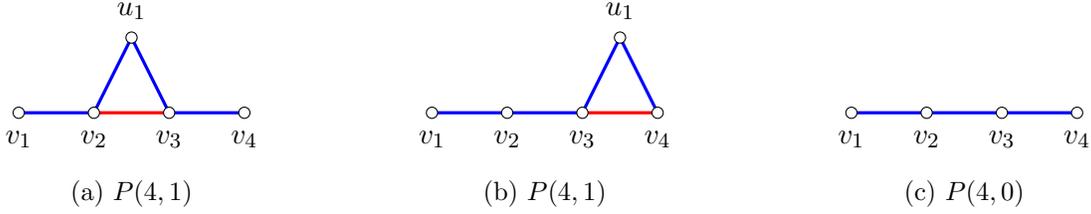
\begin{figure}[ht]
	\centering
	\begin{subfigure}{0.32\textwidth}
		\centering
		\begin{tikzpicture}[
			node/.style={circle, draw, fill=white, inner sep=1.5pt},
			bend edge/.style={blue, thick, bend left=20}
			]
			
			\coordinate (1) at (0,0);
			\coordinate (2) at (1,0);
			\coordinate (3) at (1.5,1);
			\coordinate (4) at (2,0);
			\coordinate (5) at (3,0);
			
			\draw[blue, very thick] (1) -- (2) -- (3) -- (4) -- (5);
			
			\node[below=3pt] at (1) {$v_1$};
			\node[below=3pt] at (2) {$v_2$};
			\node[above=3pt] at (3) {$u_1$}; 
			\node[below=3pt] at (4) {$v_3$};
			\node[below=3pt] at (5) {$v_4$};
			
			\draw[red, very thick] (2) -- (4);
			
			\node[node] at (1) {};
			\node[node] at (2) {};
			\node[node] at (3) {};
			\node[node] at (4) {};
			\node[node] at (5) {};
		\end{tikzpicture}
		\caption{$P(4,1)$}
	\end{subfigure}
	\begin{subfigure}{0.33\textwidth}
		\centering
		\begin{tikzpicture}[
		node/.style={circle, draw, fill=white, inner sep=1.5pt},
			bend edge/.style={blue, thick, bend left=20}
			]
			
			\coordinate (1) at (0,0);
			\coordinate (2) at (1,0);
			\coordinate (3) at (2,0);
			\coordinate (4) at (2.5,1);
			\coordinate (5) at (3,0);
			
			\draw[blue, very thick] (1) -- (2) -- (3) -- (4) -- (5);
			
			\node[below=3pt] at (1) {$v_1$};
			\node[below=3pt] at (2) {$v_2$};
			\node[below=3pt] at (3) {$v_3$}; 
			\node[above=3pt] at (4) {$u_1$};
			\node[below=3pt] at (5) {$v_4$};
			
			\draw[red, very thick] (3) -- (5);
			
			\node[node] at (1) {};
			\node[node] at (2) {};
			\node[node] at (3) {};
			\node[node] at (4) {};
			\node[node] at (5) {};
		\end{tikzpicture}
		\caption{$P(4,1)$}
	\end{subfigure}
	\begin{subfigure}{0.33\textwidth}
		\centering
		\begin{tikzpicture}[
		node/.style={circle, draw, fill=white, inner sep=1.5pt},
			bend edge/.style={blue, thick, bend left=20}
			]
			
			\coordinate (1) at (0,0);
			\coordinate (2) at (1,0);
			\coordinate (3) at (2,0);
			\coordinate (4) at (3,0);

			\draw[blue, very thick] (1) -- (2) -- (3) -- (4);
			
			\node[below=3pt] at (1) {$v_1$};
			\node[below=3pt] at (2) {$v_2$};
			\node[below=3pt] at (3) {$v_3$}; 
			\node[below=3pt] at (4) {$v_4$};
			
			\node[node] at (1) {};
			\node[node] at (2) {};
			\node[node] at (3) {};
			\node[node] at (4) {};
			
		\end{tikzpicture}
		\caption{$P(4,0)$}
	\end{subfigure}
	\caption{The two drawings of $P(4,1)$ and the drawing of $P(4,0)$.
	}
	\label{fig:P410}
\end{figure}

\begin{lemma}\label{lem:G1}
Let $i+j\le \ell-3$ and $i\ge 4$. Suppose Builder has constructed a copy of $P(i,j)$ within $i+2j-1$ rounds, and moreover the edge $v_1v_2$ is blue. Then Builder can force a blue $C_\ell$ within a total of $N$ rounds.
\end{lemma}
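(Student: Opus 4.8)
The plan is to reduce \cref{lem:G1} to \cref{lem:main1}, using \cref{lem:main2,lem:betterpl-2} only for the endgame near length $\ell$. The blue graph underlying $P(i,j)$ is a blue path $P_{i+j}$ running from $v_1$ to $v_i$ (the wavy points $u_1,\dots,u_j$ are interior, so the two ends are $v$-vertices), and it was built in $k_0=i+2j-1$ rounds. The definition of a wavy path assumes $i\ge 2j+1$, so the potential $Q:=3m-2k$ of the main path starts at $Q_0=3(i+j)-2(i+2j-1)=i-j+2\ge j+3\ge 3$. Consequently the numerical hypothesis of \cref{lem:main1} for a \emph{better} path, namely $2k\le 3m-3$ (equivalently $Q\ge 3$), already holds with $m=i+j$, and $\ell-m=\ell-(i+j)\ge 3\ge 2$ holds as well. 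Thus it would suffice to turn this blue path into a better path --- both endpoints incident with a red edge, at least one of them with two red edges --- without letting $Q$ fall below $3$.

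The obstruction is the endpoint $v_1$. Because $v_1v_2$ is blue, $v_1$ is not an endpoint of any wavy triangle, so it carries no red edge at all; this is exactly the role of the hypothesis that $v_1v_2$ is blue. The far endpoint $v_i$ is better behaved: when the path terminates in a wavy triangle, $v_i$ is already \emph{good}. The strategy is therefore to upgrade the endpoints by extension. Repeatedly offer an edge from a current endpoint to a fresh vertex and read Painter's reply. A blue reply lengthens the blue path, which raises $Q$ by $1$ and advances toward the endgame; a red reply raises the red-degree of that endpoint, and once an endpoint has two red edges a further edge at it is forced blue (no red $K_{1,3}$ ever appears), so two reds at a vertex convert it into a \emph{better} vertex at a cost of $2$ in $Q$ per red. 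Crucially, Builder never offers the edge joining the two \emph{current} endpoints until the path has the correct length, since a blue answer there would seal a useless short blue cycle.

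Running this process creates two regimes. If Painter answers red often, the reds quickly produce a better path, so \cref{lem:main1} can be invoked (as a better path while $Q\ge 3$, or as a best path under the weaker requirement $Q\ge 1$) before the round budget is spent. If Painter answers blue often, the path grows and $Q$ increases, and Builder steers it to a \emph{better} $P_{\ell-2}$ or a \emph{better}/\emph{best} $P_{\ell-1}$, at which point \cref{lem:betterpl-2} or \cref{lem:main2} completes a blue $C_\ell$ within $N$ rounds. Both regimes use that $\ell-(i+j)\ge 3$, so there is always room to extend before reaching length $\ell$.

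The step I expect to be the main obstacle is the bookkeeping that reconciles these two regimes against Painter's adaptive play, especially in the worst case where $Q_0=i-j+2$ is smallest and $v_1$ must be upgraded all the way to a \emph{better} vertex. Each red endpoint-move spends $2$ units of $Q$, and upgrading the red-free vertex $v_1$ may require two of them, so one must show that Painter cannot force enough red answers to push $Q$ below the threshold of \cref{lem:main1} without having already created the better path that triggers it --- and symmetrically that a blue-heavy response yields the length needed for \cref{lem:main2,lem:betterpl-2}. Pinning down these thresholds, together with the precise routines at each endpoint, is the technical heart of the argument.
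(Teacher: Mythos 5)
There is a genuine gap, and it sits exactly where you flagged it: the potential bookkeeping does not close against an adversarial Painter, and no amount of bookkeeping alone will fix it. In your accounting a red reply at an endpoint costs $2$ units of $Q=3m-2k$ with nothing recovered. Take $P(4,1)$ with the wavy triangle on $v_2v_3$, so $m=5$, $k=5$, $Q_0=5$, and \emph{both} endpoints $v_1,v_4$ are red-free. Painter answers red to your first two probes at $v_1$ and to the first probe at $v_4$: you now have a better path with $Q=-1$, i.e.\ $2k=3m+1$, violating the hypothesis $2k\le 3m-3$ of \cref{lem:main1}; one more red gives a best path with $2k=3m+3$, violating $2k\le 3m-1$. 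In general, upgrading two red-free endpoints costs $6$ (better) or $8$ (best) units of $Q$, while $Q_0=i-j+2$ can be as small as $5$, so Painter simply answers red until your budget is gone. Your observation that $v_i$ is sometimes already good does not rescue the worst case ($v_{i-1}v_i$ blue), and the ``blue-heavy'' regime is irrelevant because Painter, not Builder, chooses the regime.

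The missing idea — and the actual content of the paper's proof — is that a red answer must be \emph{recycled}, not written off. The paper only ever extends at the end $v_i$ and maintains the wavy-path invariant: when Painter answers red, Builder converts the situation into one of the Type~1/2/3 gadgets, whose extra vertices (the far endpoints of the red edges, which are now good vertices) are forced back into the blue path by further moves, recovering path length. This is \cref{lem:wavyG1}: each extension step either lengthens the wavy path at unit cost per edge (so the invariant ``rounds used $=$ edges of $P(i,j)$'' persists and $Q$ never degrades), or lands in \cref{lem:type}, which needs only $3m-2k\ge 5$ on the \emph{main} path because the gadget pays for itself. Iterating to $m+n\in\{\ell-2,\ell-1\}$ and closing the cycle via the red edges of the wavy triangles (rather than via \cref{lem:main1} on the bare blue path) is the endgame. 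Your reduction to \cref{lem:main1} with $m=i+j$ discards the $j$ red edges of $P(i,j)$ that make this recycling possible, which is precisely why the numbers stop working.
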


Note that the quantity $i+2j-1$ in the lemma is exactly the number of edges of $P(i,j)$. Thus, when the graph built by Builder is precisely $P(i,j)$ and $v_1v_2$ is blue, Builder achieves a blue $C_\ell$ immediately.

\subsection{Proof of the upper bound}\label{sec3--2}
\begin{Observation}\label{obs:first}
Builder either constructs $G_1$ within two rounds, or, after one additional round, constructs one of $G_2$, $G_3$, or $G_4$  (see \cref{fig:first}).
\end{Observation}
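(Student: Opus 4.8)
The plan is to treat this as a bounded opening analysis: Builder follows a short, partly adaptive probing sequence of at most three edges, and one traces Painter's colour choices down a small decision tree whose leaves are precisely the configurations $G_1,\dots,G_4$ of \cref{fig:first}. Throughout one uses the standing assumption of this section that no red $K_{1,3}$ ever appears, so the red subgraph always has maximum degree at most $2$; with only two or three revealed edges, the only thing that can vary is which edges are red and which are blue, and there are very few such patterns to examine.

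First I would have Builder reveal an edge $e_1=xy$ and then a second edge extending it, say $e_2=yz$, so that the committed shape is a path on $x,y,z$. Because this shape is symmetric under the swap $e_1\leftrightarrow e_2$, the colourings collapse to three cases according to the colour multiset of $\{e_1,e_2\}$: two reds, one red and one blue, or two blues. The claim is that one of these two-round outcomes is declared to be $G_1$, while the remaining cases are not yet terminal and call for a third edge. If Builder instead adapts $e_2$ to the colour of $e_1$, the same leaves are reached slightly more efficiently; either framing yields the same enumeration, so I would present whichever makes the round count cleanest.

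Second, for each non-terminal branch I would have Builder reveal one further edge incident to the current configuration, chosen so that, no matter how Painter colours it, the result is isomorphic to one of $G_2$, $G_3$, $G_4$. The safeguard here is that three edges can form a red $K_{1,3}$ only if all three red edges share a vertex, which Builder avoids simply by placing the third edge away from any vertex already carrying two red edges; hence every leaf is a legal three-edge graph, and one finishes by directly inspecting each leaf against \cref{fig:first}.

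The only real obstacle is exhaustiveness together with the symmetry bookkeeping: one must verify that the three colour-multiset branches for $\{e_1,e_2\}$, with the non-terminal ones refined by the colour of the third edge, cover every possibility, and that genuinely different colourings collapse --- up to graph isomorphism and up to the probe's swap symmetry --- to exactly the four graphs listed, with $G_1$ the unique outcome requiring only two rounds. This is a finite verification and carries no hidden difficulty; the substantive work is deferred to the later lemmas (\cref{lem:main1,lem:main2,lem:betterpl-2,lem:type,lem:G1}), which convert each opening configuration $G_i$ into a blue $C_\ell$ within $N$ rounds.
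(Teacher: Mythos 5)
Your proposal is correct and follows essentially the same route as the paper: build a $P_3$, stop with $G_1$ if both edges are blue, and otherwise close the triangle with a third edge, whose two possible colours yield $G_2/G_3$ (one red edge on the path) or $G_3/G_4$ (two red edges on the path). The only point to make explicit is that the third edge must be the one joining the two endpoints of the $P_3$ --- this is forced since $G_2$, $G_3$, $G_4$ are all triangles --- and note that with only three vertices a red $K_{1,3}$ cannot arise, so your ``safeguard'' is automatic.
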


\begin{figure}[ht]
	\centering
	
	\begin{subfigure}[b]{0.24\textwidth}
		\centering
		\begin{tikzpicture}[scale=1.9,
			vertex/.style={circle, draw, fill=white, inner sep=1.5pt},
			every label/.style={scale=1, black}]
			\node[vertex, label=below:$v_1$] (v1) at (0, 0) {};
			\node[vertex, label=below:$v_2$] (v2) at (0.5, 0) {};
			\node[vertex, label=below:$v_3$] (v3) at (1, 0) {};
			\draw[line width=0.5mm, blue] (v1) -- (v2);
			\draw[line width=0.5mm, blue] (v2) -- (v3);
		\end{tikzpicture}
		\caption{$G_1$}
		\label{G1}
	\end{subfigure}\hfill
	\begin{subfigure}[b]{0.24\textwidth}
		\centering
		\begin{tikzpicture}[scale=1.9,
			vertex/.style={circle, draw, fill=white, inner sep=1.5pt},
			every label/.style={scale=1, black}]
			\node[vertex, label=below:$v_2$] (v2) at (0, 0) {};
			\node[vertex, label=below:$v_3$] (v3) at (1, 0) {};
			\node[vertex, label=left:$u_2$]   (u2) at (0.5, 0.5) {};
			\draw[line width=0.5mm, red]  (v2) -- (v3);
			\draw[line width=0.5mm, blue] (v2) -- (u2);
			\draw[line width=0.5mm, blue] (v3) -- (u2);
		\end{tikzpicture}
		\caption{$G_2$}
		\label{G2}
	\end{subfigure}\hfill
	\begin{subfigure}[b]{0.24\textwidth}
		\centering
		\begin{tikzpicture}[scale=1.9,
			vertex/.style={circle, draw, fill=white, inner sep=1.5pt},
			every label/.style={scale=1, black}]
			\node[vertex, label=below:$x$] (x) at (0, 0) {};
			\node[vertex, label=below:$y$] (y) at (1, 0) {};
			\node[vertex, label=left:$v_0$]   (v0) at (0.5, 0.5) {};
			\draw[line width=0.5mm, blue] (x) -- (y);
			\draw[line width=0.5mm, red]  (x) -- (v0);
			\draw[line width=0.5mm, red]  (y) -- (v0);
		\end{tikzpicture}
		\caption{$G_3$}
		\label{G3}
	\end{subfigure}\hfill
	\begin{subfigure}[b]{0.24\textwidth}
		\centering
		\begin{tikzpicture}[scale=1.9,
			vertex/.style={circle, draw, fill=white, inner sep=1.5pt},
			every label/.style={scale=1, black}]
			\node[vertex, label=below:$v_1$] (v1) at (0, 0) {};
			\node[vertex, label=below:$v_2$] (v2) at (1, 0) {};
			\node[vertex, label=left:$v_3$]   (v3) at (0.5, 0.5) {};
			\draw[line width=0.5mm, red] (v1) -- (v2);
			\draw[line width=0.5mm, red] (v1) -- (v3);
			\draw[line width=0.5mm, red] (v2) -- (v3);
		\end{tikzpicture}
		\caption{$G_4$}
		\label{G4}
	\end{subfigure}
	\caption{}
	\label{fig:first}
\end{figure}

\begin{proof}
Builder first creates a $P_3$. If both edges of this $P_3$ are blue, then we obtain $G_1$. If the $P_3$ contains one red edge and one blue edge, then Builder joins the two endpoints of the $P_3$, thereby exposing an edge $e_1$. If $e_1$ is blue, we obtain $G_2$; otherwise, we obtain $G_3$. If both edges of the $P_3$ are red, then Builder again joins its two endpoints, exposing an edge $e_2$. If $e_2$ is blue, we obtain $G_3$; otherwise, we obtain $G_4$.
\end{proof}

Next, we design separate algorithms for the cases \cref{G1,G2,G3} that extend the obtained configuration to a blue $C_\ell$; the case \cref{G4} is easier to handle. Our overall strategy is as follows. By \cref{obs:first}, Builder first forces one of $G_1$, $G_2$, $G_3$, and $G_4$; we then apply the corresponding algorithm to the resulting graph to complete the proof.

\begin{thm}\label{thm:G1}
If Builder constructs $G_1$ within the first two rounds of the $(K_{1,3},C_\ell)$--online Ramsey game, then Builder can force a blue $C_\ell$ within a total of $N$ rounds.
\end{thm}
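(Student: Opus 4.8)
The plan is to run a short branching routine that, starting from the blue $P_3$ of $G_1$, reaches within at most four further rounds one of the configurations handled by \cref{lem:G1}, \cref{lem:type}, or \cref{lem:main1}, and then invoke that lemma to finish within $N$ rounds. Write $G_1$ as the blue path $v_1v_2v_3$, constructed in the first $2$ rounds; this is already the wavy path $P(3,0)$, but \cref{lem:G1} requires $i\ge 4$, so the first goal is to grow it.

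Builder first exposes $v_3v_4$. If it is colored blue, the blue path becomes $v_1v_2v_3v_4=P(4,0)$, built in exactly $3=i+2j-1$ rounds with $v_1v_2$ blue; since $i=4\ge 4$ and $i+j=4\le \ell-3$, \cref{lem:G1} closes the game. So suppose $v_3v_4$ is red. Builder then exposes $v_3v_5$. If this is red as well, then $v_3$ carries two red edges and we have a Type~2 configuration on the blue main path $P_3$ (so $m=3$), built in $4=k+2$ rounds with $k=2$; the hypotheses $3m-2k=5\ge 5$ and $\ell-m=\ell-3\ge 6$ of \cref{lem:type} hold, and Builder wins.

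The remaining branch is $v_3v_4$ red, $v_3v_5$ blue: after $4$ rounds Builder holds the blue path $v_1v_2v_3v_5$ together with a pendant red edge $v_3v_4$, which is not yet part of a wavy triangle. Builder now exposes $v_4v_5$. If it is blue, then $v_3,v_5,v_4$ form a blue $P_3$ closed by the red edge $v_3v_4$, so the configuration is the wavy path $P(4,1)$ with wavy point $v_5$, built in exactly $5=i+2j-1$ rounds and with $v_1v_2$ blue; as $i=4$ and $i+j=5\le\ell-3$, \cref{lem:G1} finishes. If instead $v_4v_5$ is red, then $v_4$ now has the two red edges $v_3v_4$ and $v_4v_5$, so $v_4$ is a better vertex. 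Builder exposes $v_1v_4$; since $v_4$ already has red degree $2$, Painter must color it blue (otherwise a red $K_{1,3}$ appears and Builder has already won). This yields the blue path $v_4v_1v_2v_3v_5$, a $P_5$ whose endpoint $v_4$ is better and whose endpoint $v_5$ is good, i.e.\ a better path, constructed in $k=6$ rounds. Here $2k=12=3m-3$ with $m=5$, so \cref{lem:main1} applies and Builder wins within $N$ rounds.

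The main obstacle is the last branch, where the accounting is tight: the single ``wasted'' red edge $v_3v_4$ must be paid for, and the equality $2k=3m-3$ in \cref{lem:main1} leaves no slack. The key device for resolving it is to exploit that an edge incident with an already-better vertex is forced blue; folding the path back through the forced edge $v_1v_4$ simultaneously consumes the better vertex $v_4$ as a new endpoint and converts the two otherwise-wasted red edges into exactly the red degrees needed to make $P_5$ a better path. Verifying that every branch respects the standing assumption (no red $K_{1,3}$ ever arises, since no vertex acquires red degree exceeding $2$ before a lemma is invoked) and that each invoked lemma's side conditions hold completes the proof; one checks that these side conditions need only $\ell\ge 9$ in the $G_1$ case, well within the range $\ell\ge 13$.
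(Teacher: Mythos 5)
Your proposal is correct and follows essentially the same branching routine as the paper: expose $v_3v_4$, then $v_3u_1$, then $v_4u_1$, landing in $P(4,0)$, Type~2, $P(4,1)$, or the Type~3 situation, and closing each via \cref{lem:G1}, \cref{lem:type}, or \cref{lem:main1}. The only difference is cosmetic: in the final branch the paper simply recognizes a Type~3 configuration (with $m=3$, $k=2$) and cites \cref{lem:type}, whereas you inline the one forced move ($v_1v_4$ blue) that the proof of \cref{lem:type} would perform, arriving at the same better $P_5$ in $6$ rounds and then invoking \cref{lem:main1}.
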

\begin{proof}
Builder first introduces a new vertex $v_4$ and exposes the edge $v_3v_4$. If this edge is blue, then Builder has obtained $P(4,0)$. If $v_3v_4$ is red, then Builder introduces a new vertex $u_1$ and exposes the edge $v_3u_1$. If $v_3u_1$ is red, then a Type~2 configuration is formed. If instead $v_3u_1$ is blue, Builder next exposes the edge $v_4u_1$. If $v_4u_1$ is blue, then Builder has obtained $P(4,1)$; if $v_4u_1$ is red, then a Type~3 configuration is formed.

In both the Type~2 and Type~3 outcomes above, the blue main path is precisely the blue $P_3$ constructed within the first two rounds, and thus the desired conclusion follows from \cref{lem:type}. In the remaining outcomes $P(4,0)$ and $P(4,1)$, the conclusion follows from \cref{lem:G1}.
\end{proof}

\begin{thm}\label{thm:G2}
If Builder constructs $G_2$ within the first three rounds of the $(K_{1,3},C_\ell)$--online Ramsey game, then Builder can force a blue $C_\ell$ within a total of $N$ rounds.
\end{thm}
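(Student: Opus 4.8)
The configuration $G_2$ (see \cref{G2}) is a blue path $v_2u_2v_3$ together with the red edge $v_2v_3$; equivalently it is the wavy path $P(2,1)$, already built in exactly three rounds and already containing one wavy triangle. My plan is to grow this into a configuration to which one of the terminal lemmas applies: ideally a clean wavy path $P(i,j)$ with $i\ge 4$ and $v_1v_2$ blue, so that \cref{lem:G1} finishes the game; failing that, a Type~1/2/3 gadget to invoke \cref{lem:type}, or a better/best path to invoke \cref{lem:main1}. The single structural obstruction I must keep in mind is that in $G_2$ the unique wavy triangle sits at the very end of the spine, with its two path-vertices $v_2,v_3$ joined by the red wavy edge; since these vertices already carry a red edge they cannot host a second (vertex-disjoint) wavy triangle, and because the leftmost path-edge is then red, the hypothesis ``$v_1v_2$ blue'' of \cref{lem:G1} fails outright for $G_2$ itself and for every rightward extension.

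The cooperative line is short. First I would expose $v_1v_2$ and then $v_0v_1$ for two fresh vertices $v_1,v_0$, attaching a blue ``handle'' to the left of the triangle. If Painter colours both edges blue, the result is the wavy path $P(4,1)$ on the spine $v_0v_1v_2u_2v_3$, in which the leftmost edge $v_0v_1$ is blue and the triangle has moved into the interior; this is a clean $P(4,1)$ built in exactly $5=i+2j-1$ rounds with $v_1v_2$ blue and $i+j=5\le \ell-3$, so \cref{lem:G1} applies at once. Thus the only real work lies in the branches where one of these, or a later probe, is coloured red.

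To handle the red branches I would lengthen the blue spine before committing to a terminal lemma, exploiting the identity that a blue extension of the main path sends the slack $3m-2k$ to $3m-2k+1$: growing the blue path of $G_2$ from $P_3$ to $P_5$ raises this slack from $3$ to $5$, exactly the threshold needed by \cref{lem:type}, while $\ell\ge 13$ keeps $\ell-m\ge 6$ over the relevant range of $m$. Once the spine is long enough I would probe a \emph{fresh} endpoint: a red answer, followed by one more probe, produces either a red-saturated vertex (a \emph{Type~2} gadget) or, after an intervening blue edge, a \emph{Type~3} gadget, in either case invoking \cref{lem:type}; a persistent blue answer either lengthens the spine further or completes another interior wavy triangle. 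When Painter instead drives the spine all the way out, the endgame is to close a long blue path into $C_\ell$ via \cref{lem:main2} for a blue $P_{\ell-1}$, or \cref{lem:betterpl-2} for a better $P_{\ell-2}$, once the accumulated red edges have made the endpoints good or better.

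The hard part, and the reason $G_2$ is more delicate than $G_1$, is the tightness of the budget at small scale. A red probe at one of the pre-loaded vertices $v_2,v_3$ creates a stray red edge that cannot be folded into a wavy triangle, so it effectively ``wastes'' a round; the naive better-path and best-path configurations one reaches this way land just outside the inequalities $2k\le 3m-3$ and $2k\le 3m-1$ of \cref{lem:main1} (an off-by-one in $k$), and the short Type configurations likewise fail $3m-2k\ge 5$. The crux is therefore to schedule the probes so that the main path has already grown far enough to restore the slack at the exact moment a terminal lemma is invoked, and to verify, case by case on Painter's colourings, that the cumulative round count never exceeds $N$. I expect the bookkeeping of these inequalities, together with keeping each newly built wavy triangle vertex-disjoint from the original one, to be the main obstacle.
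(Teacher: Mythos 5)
Your framework is the paper's: probe the two ``outer'' edges, hope for a clean $P(4,1)$ with the leftmost spine edge blue so that \cref{lem:G1} finishes, and otherwise fall back on \cref{lem:type}, \cref{lem:main1}, or \cref{lem:main2}. The all-blue case you describe is correct and matches the paper's Case~1 (five rounds, $i=4$, $j=1$, leftmost edge blue). But the rest of the proposal is a plan rather than a proof, and the gap is not mere bookkeeping: you have correctly diagnosed that after a red answer the naive better/best paths and the short Type gadgets miss the hypotheses of \cref{lem:main1} and \cref{lem:type} by a constant, and you propose to fix this by ``lengthening the blue spine'' first --- but against an adversarial Painter there is no way to lengthen the blue spine for free, since each extension probe can itself be answered red (and in the branch where both outer probes come back red there is no blue spine beyond the original $P_3$ to lengthen at all). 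So the one step your argument actually relies on in every adversarial branch is exactly the step you have not supplied.

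The missing idea, which is the substance of the paper's Cases~2 and~3, is to stop trying to extend the blue path directly and instead let Painter build a long \emph{red} path $v_4v_5\cdots v_t$ (each interior vertex then carries two red edges, hence is a better vertex at which every further edge is forced blue), and to convert that red path into a blue path of roughly twice its length by inserting zigzag vertices $u_x$ with forced-blue edges $v_{x}u_x$, $u_xv_{x+1}$. This doubling is what recovers the lost rounds: a red path of length about $\lfloor \ell/2\rfloor$ built with one wasted round per step yields a better or best blue path on about $\ell-1$ vertices within roughly $3\ell/2$ rounds, at which point \cref{lem:main2}, \cref{lem:betterpl-2}, or a direct closure (with a parity split on $\ell$) applies. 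Without this construction, or some substitute for it, the branches you label ``the main obstacle'' cannot be closed, so the proposal as written does not establish the theorem.
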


\begin{proof}
We use the vertex labeling of $G_2$ shown in~\cref{G2}. Builder now introduces two new vertices $v_1$ and $v_4$, and exposes the edges $v_1v_2$ and $v_3v_4$. We consider the following three cases.
\setcounter{case}{0}
\begin{case}
Both $v_1v_2$ and $v_3v_4$ are blue.
\end{case}
In this case, within five rounds Builder has constructed a copy of $P(4,1)$ with $v_1v_2$ blue, and therefore the conclusion follows from~\cref{lem:G1}.

\begin{case}
Exactly one of $v_1v_2$ and $v_3v_4$ is red and the other is blue. Without loss of generality, assume that $v_1v_2$ is red and $v_3v_4$ is blue.
\end{case}
Builder exposes the edge $v_1v_4$. If this edge is blue, then $v_1v_4v_3u_2v_2$ forms a better blue $P_5$ constructed within $6$ rounds, and we can apply~\cref{lem:main1} to finish the proof.
If instead $v_1v_4$ is red, then starting with $i=4$ Builder repeatedly performs the following step: introduce a new vertex $v_{i+1}$ and expose the edge $v_iv_{i+1}$. This process stops as soon as either $v_iv_{i+1}$ is blue or the edge $v_{\lfloor \ell/2\rfloor}v_{\lfloor \ell/2\rfloor+1}$ is exposed; otherwise, we increment $i$ and continue from $v_{i+1}$. After the process terminates, we refer to~\cref{H1}. Depending on whether the final edge $v_iv_{i+1}$ is red or blue, we proceed with separate analyses.
	
	\begin{figure}[htp]
		\centering
		\begin{tikzpicture}[global scale=1.2]
			\begin{scope}[line width=1pt, blue]
				\draw (2,1) to (2.5,2);
				\draw (2.5,2) to (3,1);
				\draw (3,1) to (4,1);
				\draw[red, line width=1pt] (7,1.015) -- (8,1.015);
				\draw[blue, line width=1pt] (7,0.995) -- (8,0.995);
			\end{scope}      	  		
			
			\begin{scope}[line width=1pt, red]
				\draw (1,1) to (2,1);
				\draw(1,1) to[bend right=30] (4,1);
				\draw (2,1) to (3,1);
				\draw (4,1) to (5,1);
				\draw (6,1) to (7,1);
			\end{scope}
			\begin{scope}[line width=1pt, red, dotted]
				\draw (4.5,1) to (5,1);
				\draw (5,1) to (6,1);
			\end{scope}
			\filldraw[fill=white] (1,1) circle (2pt);
			\filldraw[fill=white](2,1) circle (2pt);
			\filldraw[fill=white] (3,1) circle (2pt);
			\filldraw[fill=white] (4,1) circle (2pt);
		\filldraw[fill=white] (5,1) circle (2pt);
			\filldraw[fill=white] (7,1) circle (2pt);
		\filldraw[fill=white](8,1) circle (2pt);
		\filldraw[fill=white] (2.5,2)circle(2pt);
			
			\node at (1,0.7) {$v_1$};
			\node at (2,0.7) {$v_2$};
			\node at (3,0.7) {$v_3$};
			\node at (4,0.7) {$v_4$};
			\node at (7,0.7) {$v_i$};
			\node at (2.5,2.3) {$u_2$};
			\node at (8,0.7) {$v_{i+1}$};		
		\end{tikzpicture}
		\caption{}
		\label{H1}
	\end{figure}

   \setcounter{subcase}{0}

\begin{subcase}
$i=4$ and $v_iv_{i+1}$ is blue.
\end{subcase}
Builder exposes the edge $v_1v_{i+1}$, which must be blue. Hence $v_1v_5v_4v_3u_2v_2$ forms a best blue $P_6$ constructed within $8$ rounds, and we can apply~\cref{lem:main1} to complete the proof.

\begin{subcase}
$5\le i\le \left\lfloor \ell/2\right\rfloor$ and $v_iv_{i+1}$ is blue.
\end{subcase}
Builder exposes the edges $v_1v_i$ and $v_{i-1}v_{i+1}$; both must be blue. If $i=5$, then within $10$ rounds Builder has constructed a best blue $P_7$, and we can apply~\cref{lem:main1}. If $6\le i\le \left\lfloor \ell/2\right\rfloor$, then Builder introduces new vertices $u_3,\ldots,u_{i-3}$ and, for each $6\le j\le i$, exposes the edges $v_{j-2}u_{j-3}$ and $u_{j-3}v_{j-1}$, all of which must be blue. Consequently, within $3i-5$ rounds Builder constructs a best blue $P_{2i-3}$, and again~\cref{lem:main1} completes the proof.

\begin{subcase}
The edge $v_{\lfloor \ell/2\rfloor}v_{\lfloor \ell/2\rfloor+1}$ is red.
\end{subcase}
Builder introduces new vertices $u_1,u_3,\ldots,u_{i-2}$. Next, Builder exposes the edges $v_1u_1$ and $u_1v_2$, and for each $5\le j\le i$ exposes the edges $v_{j-1}u_{j-2}$ and $u_{j-2}v_j$; all these edges must be blue.

If $\ell$ is even, then Builder exposes the edge $v_1v_{i+1}$, which must be blue. Thus, within $3\ell/2-2$ rounds Builder has constructed a better blue $P_{\ell-1}$, and the claim follows from~\cref{lem:main2}.

If $\ell$ is odd, then $i=(\ell-1)/2$. Builder introduces a new vertex $v_{i+2}$ and exposes the edge $v_{i+1}v_{i+2}$. If $v_{i+1}v_{i+2}$ is blue, then Builder exposes $v_1v_{i+2}$, which must be blue, and hence within $(3\ell-1)/2-2$ rounds Builder has constructed a better blue $P_{\ell-1}$. Applying~\cref{lem:main2} completes the proof.
If instead $v_{i+1}v_{i+2}$ is red, then Builder introduces new vertices $u_{i-1}$ and $u_i$, and exposes the edges
$v_iu_{i-1}$, $u_{i-1}v_{i+1}$, $v_{i+1}u_i$, and $u_iv_1$, all of which must be blue. Consequently, within $(3\ell+1)/2$ rounds Builder constructs a blue $C_\ell$.

	\begin{case}
	Both $v_1v_2$ and $v_3v_4$ are red.
	\end{case}
We extend the red path $v_1v_2v_3v_4$ starting from $v_4$ as follows. For each $i\ge 4$, as long as the edge $v_{i-1}v_i$ is red, Builder connects $v_i$ to two new vertices, namely $u_{i-1}$ and $v_{i+1}$. If both newly exposed edges $v_iu_{i-1}$ and $v_iv_{i+1}$ are blue, then we stop the extension. Otherwise, without loss of generality, assume that $v_iu_{i-1}$ is blue while $v_iv_{i+1}$ is red; in this case Builder continues the same procedure from $v_{i+1}$, thereby extending the red path. The process terminates either when the red path reaches length $\left\lfloor \ell/2\right\rfloor$, or when there exists some $t$ with $4\le t\le \left\lfloor \ell/2\right\rfloor$ such that both $v_tu_{t-1}$ and $v_tv_{t+1}$ are blue. After the process terminates, we refer to~\cref{H2}. According to whether the edge $v_tv_{t+1}$ is red or blue, we proceed with separate analyses.

	\begin{figure}[htp]
		\centering
		\begin{tikzpicture}[global scale=1.2]
			\begin{scope}[line width=1pt, blue]
				\draw[red, line width=1pt] (7,1.015) -- (8,1.015);
				\draw[blue, line width=1pt] (7,0.985) -- (8,0.985);
				\draw (2,1) to (2.5,2);
				\draw (4,1) to (4,2);
				\draw (7,1) to (7,2);
				\draw (2.5,2) to (3,1);
			\end{scope}      	  		
			
			\begin{scope}[line width=1pt, red]
				\draw (1,1) to (2,1);
				\draw (2,1) to (3,1);
				\draw (3,1) to (4,1);
				\draw (4,1) to (5,1);
				\draw (6,1) to (7,1);
			\end{scope}
			\begin{scope}[line width=1pt, red, dotted]
				\draw (4.5,1) to (5,1);
				\draw (5,1) to (6,1);
			\end{scope}
			\filldraw[fill=white](1,1) circle (2pt);
			\filldraw[fill=white] (2,1) circle (2pt);
			\filldraw[fill=white] (3,1) circle (2pt);
			\filldraw[fill=white] (4,1) circle (2pt);
			\filldraw[fill=white] (7,1) circle (2pt);
			\filldraw[fill=white] (8,1) circle (2pt);
			\filldraw[fill=white] (2.5,2) circle (2pt);
			\filldraw[fill=white] (4,2) circle (2pt);
			\filldraw[fill=white] (7,2) circle (2pt);
			\node at (1,0.7) {$v_1$};
			\node at (2,0.7) {$v_2$};
			\node at (3,0.7) {$v_3$};
			\node at (4,0.7) {$v_4$};
			\node at (7,0.7) {$v_t$};
			\node at (2.5,2.3) {$u_2$};
			\node at (4,2.3) {$u_3$};
			\node at (7,2.3) {$u_{t-1}$};
			\node at (8,0.7) {$v_{t+1}$};		
		\end{tikzpicture}
		\caption{}
		\label{H2}
	\end{figure}
   
   \setcounter{subcase}{0}

\begin{subcase}
There exists some $t$ with $4\le t\le \left\lfloor \ell/2\right\rfloor$ such that both $v_tu_{t-1}$ and $v_tv_{t+1}$ are blue.
\end{subcase}
Builder first exposes the edge $v_1v_{t+1}$. If this edge is blue, then for each $3\le x\le t-1$ Builder exposes the edge $v_xu_x$, which must be blue. Hence, within $3t-3$ rounds Builder constructs a better $P_{2t-1}$. If instead $v_1v_{t+1}$ is red, then Builder first exposes the edges $v_xu_x$ for all $4\le x\le t-1$ (if any), which must be blue, and then exposes the edges $v_1u_3$ and $v_3v_{t+1}$, both of which must be blue. Consequently, within $3t-2$ rounds Builder constructs a best $P_{2t-1}$.

For either of the two blue paths obtained above, if $4\le t\le \left\lfloor \ell/2\right\rfloor-1$, or if $t=\left\lfloor \ell/2\right\rfloor$ and $\ell$ is odd, then we may apply~\cref{lem:main1} to finish the proof. If $t=\left\lfloor \ell/2\right\rfloor$ and $\ell$ is even, then~\cref{lem:main2} yields the desired conclusion.

\begin{subcase}
The red path reaches length $\left\lfloor \ell/2\right\rfloor$.
\end{subcase}
In this case $t=\left\lfloor \ell/2\right\rfloor$. Builder first exposes the edges $v_xu_x$ for all $3\le x\le t-1$, all of which are blue, and then exposes the edge $v_1v_{t+1}$.

If $v_1v_{t+1}$ is blue, then Builder next exposes the edge $v_2v_{t+1}$, which must also be blue. Thus, within $3t-2$ rounds Builder constructs a better $P_{2t-1}$. If $\ell$ is even, then applying~\cref{lem:main2} yields a blue $C_\ell$ within a total of $3\ell/2+1$ rounds. If $\ell$ is odd, then $t=\left\lfloor \ell/2\right\rfloor=(\ell-1)/2$, so Builder has constructed a better $P_{\ell-2}$ within $(3\ell-3)/2-2$ rounds; applying~\cref{lem:betterpl-2} then produces a blue $C_\ell$ within a total of $N$ rounds.

If $v_1v_{t+1}$ is red, then Builder proceeds according to the parity of $\ell$. If $\ell$ is even, Builder introduces a new vertex $u_1$ and exposes the edges $v_1u_1$ and $u_1v_2$, both of which must be blue. Hence Builder constructs a best $P_{\ell-1}$ within $3t-1=3\ell/2-1$ rounds, and the conclusion follows from~\cref{lem:main2}. If $\ell$ is odd, Builder introduces two new vertices $u$ and $w$, and sequentially exposes the edges $v_2u$, $uv_{t+1}$, $v_{t+1}w$, $wv_1$, and $v_1v_t$, all of which must be blue. Consequently, within $3t+2\le N$ rounds Builder obtains a blue cycle $C_\ell$, namely $v_2u_2\cdots v_{t-2}u_{t-1}v_tv_1wv_{t+1}uv_2$.
\end{proof}

To handle the case \cref{G3}, we introduce a technique for extending blue paths, which we call the \emph{Blue Path Expansion Algorithm}. Starting from a blue path, the algorithm produces a graph on
$\alpha\in\{\ell-3,\ell-2,\ell-1\}$ vertices, from which we will then construct a blue $C_\ell$.

\noindent\textbf{Blue Path Expansion Algorithm.}
\begin{description}

\item[\textbf{Start.}]
Given a graph $G$ built by Builder, let $x$ and $y$ be the endpoints of a longest blue path in $G$.

\item[\textbf{Phase 1.}]
Builder introduces a new vertex $v_1$ and exposes the edge $yv_1$.
\begin{itemize}
	\item If $yv_1$ is blue, then we call Phase~1 an instance of \emph{\textbf{Process~1}}. In this case the resulting graph contains a longer blue path, and we proceed to \textbf{End~1}.
	\item If $yv_1$ is red, we move to \textbf{Phase~2}. Moreover, the action of introducing $v_1$ and exposing the red edge $yv_1$ is regarded as \textbf{Step~1} of Phase~2.
\end{itemize}

\item[\textbf{Phase 2.}]
\textbf{Step 1.} Introduce a new vertex $v_1$ and expose the edge $yv_1$, which is red.

\textbf{Step 2.} Starting with $i=1$, Builder iteratively connects $v_i$ to two new vertices $u_i$ and $v_{i+1}$ by exposing the edges $v_iu_i$ and $v_iv_{i+1}$. If these two edges are one red and one blue, then without loss of generality assume that $v_iu_i$ is blue and $v_iv_{i+1}$ is red; Builder then continues the same procedure from $v_{i+1}$. This iteration stops as soon as there exists some $t\ge 1$ such that both $v_tu_t$ and $v_tv_{t+1}$ are blue, or when the total number of vertices reaches $\alpha$.

If the total number of vertices reaches $\alpha$ while $v_tu_t$ and $v_tv_{t+1}$ are one red and one blue, then go to \textbf{End~2}. If the total number of vertices is at most $\alpha$ and both $v_tu_t$ and $v_tv_{t+1}$ are blue, we proceed as follows.

For each $1\le j\le t-1$, Builder exposes the edge $v_ju_{j+1}$ (if it exists); these edges must be blue. Finally, expose the edge $yv_{t+1}$:
\begin{itemize}
	\item If $yv_{t+1}$ is blue, then we call this execution of Phase~2 an instance of \emph{\textbf{Process~2}}.
	\item If $yv_{t+1}$ is red, then Builder additionally exposes the edge $yu_1$, which must be blue; in this case we call the execution an instance of \emph{\textbf{Process~3}}.
\end{itemize}
In either outcome, Builder has obtained a graph containing a longer blue path, and we proceed to \textbf{End~1}.

\item[\textbf{End 1.}]
If the total number of vertices in Builder's graph has reached $\alpha$, then terminate the algorithm. Otherwise, take the current graph (which contains a longer blue path) as the new input $G$ and iterate from \textbf{Start}. There is one exception: if immediately after completing Step~1 of Phase~2 the graph already has $\ell-3$ vertices, then we do not terminate; instead, we continue with Step~2 until the graph reaches $\ell-1$ vertices, and then terminate.

\item[\textbf{End 2.}]
In this situation, without loss of generality, $v_tu_t$ is blue and $v_tv_{t+1}$ is red, and the total number of vertices has reached $\alpha$. We distinguish the following cases:
\begin{itemize}
	\item If $\alpha\in\{\ell-1,\ell-2\}$, we call this execution an instance of \emph{\textbf{Process~4}} and terminate the algorithm.
	\item If $\alpha=\ell-3$, then Builder further connects $v_{t+1}$ to two new vertices $u_{t+1}$ and $v_{t+2}$ by exposing the edges $v_{t+1}u_{t+1}$ and $v_{t+1}v_{t+2}$. If both of these edges are blue, then Builder exposes $v_1v_{t+1}$: if it is blue, this results in \textbf{Process~2}; if it is red, Builder additionally exposes the edge $yu_1$, which must be blue, yielding \textbf{Process~3}. If instead the two edges $v_{t+1}u_{t+1}$ and $v_{t+1}v_{t+2}$ are one red and one blue, then this execution is \textbf{Process~4}, and we terminate the algorithm.
\end{itemize}

\end{description}

\noindent\textbf{Remark on the Blue Path Expansion Algorithm.}

The Blue Path Expansion Algorithm is a constructive strategy for Builder rather than an ``algorithm'' in the computational sense. Starting from a current longest blue path, Builder first attempts to extend it directly at an endpoint. If this attempt is blocked by a red edge, the strategy switches to a more delicate iterative procedure designed to overcome this obstacle. In each execution, the strategy either extends the blue path in a controlled way via \textbf{Process~1--3}, or it terminates in \textbf{Process~4}, producing a well-structured graph on $\alpha\in\{\ell-3,\ell-2,\ell-1\}$ vertices; these structural properties will be exploited later to construct a blue $C_\ell$.

Observe that the entire expansion strategy consists of a sequence of Processes~1--4. In each instance of \textbf{Process~1}, Builder spends one round and increases the length of the blue path by one vertex. In each instance of \textbf{Process~2}, Builder spends $3t+1$ rounds and increases the blue path by $2t+1$ vertices. In each instance of \textbf{Process~3}, Builder spends $3t+2$ rounds and likewise increases the blue path by $2t+1$ vertices. If \textbf{Process~4} occurs, the strategy terminates; we will analyze this case in more detail later.

\begin{thm}\label{thm:G3}
If Builder constructs $G_3$ within the first three rounds of the $(K_{1,3},C_\ell)$--online Ramsey game, then Builder can force a blue $C_\ell$ within a total of $N$ rounds.
\end{thm}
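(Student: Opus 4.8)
The plan is to start from the $G_3$ configuration and immediately feed it into the \textbf{Blue Path Expansion Algorithm}, since $G_3$ already contains a blue edge $xy$ together with a vertex $v_0$ joined to both $x$ and $y$ by red edges. The initial blue path is just the single blue edge $xy$; I would take $y$ as the growing endpoint and note that $x$ is already a \emph{good} vertex (it carries the red edge $xv_0$), which will be useful when the final path is closed into a cycle. First I would fix the target parameter $\alpha\in\{\ell-3,\ell-2,\ell-1\}$ according to the parity of $\ell$ and run the algorithm, keeping a running ledger of rounds versus vertices added. Using the accounting recorded in the remark, each \textbf{Process~1} costs one round per vertex, while \textbf{Process~2} and \textbf{Process~3} cost $3t+1$ and $3t+2$ rounds to add $2t+1$ vertices, so the amortized cost is always at most $\tfrac32$ rounds per vertex plus a small additive constant. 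Summing over the whole sequence of processes should show that after building a blue path on roughly $\alpha$ vertices, Builder has used at most about $\tfrac{3\alpha}{2}$ rounds, comfortably within the budget $N=\lfloor 3(\ell+1)/2\rfloor$.

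The main case split is on how the algorithm terminates. If it ends in one of \textbf{Process~1--3} (i.e.\ via \textbf{End~1}), then Builder holds a long blue path on $\alpha$ vertices whose relevant endpoints are incident with red edges, making it a \emph{better} or \emph{best} path; I would then invoke \cref{lem:main1} (when $\alpha$ is a little short of $\ell-1$) or \cref{lem:main2} (when $\alpha=\ell-1$) to close it to a blue $C_\ell$ within $N$ rounds, checking that the hypotheses $2k\le 3m-3$ or $2k\le 3m-1$ hold thanks to the amortized round count. The more delicate branch is termination via \textbf{Process~4} (through \textbf{End~2}), where the graph has $\alpha$ vertices but the last two exposed edges $v_tu_t,v_tv_{t+1}$ are one red and one blue; here the structure is essentially a \emph{wavy path} $P(i,j)$ with the blue main path threaded through the wavy triangles, so I would reorganize the edges to exhibit a copy of $P(i,j)$ with $v_1v_2$ blue and apply \cref{lem:G1}. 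The role of \textbf{End~2}'s special handling when $\alpha=\ell-3$ (adding two more vertices at $v_{t+1}$) is precisely to guarantee that either we recover a \textbf{Process~2/3} outcome or we land in a clean \textbf{Process~4} configuration of the right size.

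The hard part will be the bookkeeping that ties the number of rounds to $N$ uniformly across all termination modes, together with the parity analysis: the value $\lfloor 3(\ell+1)/2\rfloor$ behaves differently for even and odd $\ell$, and the three admissible values of $\alpha$ must be matched to the correct closing lemma so that no branch exceeds the budget. I would handle this by treating even and odd $\ell$ separately, in each case fixing $\alpha$ so that the gap $\ell-\alpha\in\{1,2,3\}$ lines up with the slack available in \cref{lem:main1,lem:main2,lem:G1}. A secondary subtlety is verifying that every edge the algorithm declares ``must be blue'' really is forced blue --- this relies on the standing assumption that no red $K_{1,3}$ ever appears, so that any vertex already incident with two red edges cannot receive a third, forcing Painter's hand; I would point out at each such step which vertex is already \emph{better} (red-degree two) to justify the forced color. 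Once these accounting and parity checks are in place, each branch terminates by citing one of \cref{lem:main1,lem:main2,lem:G1}, completing the proof.
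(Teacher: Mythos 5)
Your overall architecture matches the paper's: run the Blue Path Expansion Algorithm on $G_3$ and close the resulting long blue path with \cref{lem:main1} or \cref{lem:main2}. However, there are two concrete gaps. First, your amortized bound of ``at most $\tfrac32$ rounds per vertex plus a small additive constant'' is not true of \textbf{Process~3} in isolation: it spends $3t+2$ rounds to add $2t+1$ vertices, which strictly exceeds $\tfrac32$ per vertex. The paper rescues the count with the structural inequality $k_3\le k_1+k_2$ (after each Process~3 the new endpoint is incident with a red edge, so the next execution cannot again be Process~3), and even with this inequality the boundary case $k_3=k_1+k_2$ with $\alpha=\ell-3$ produces a better $P_\alpha$ in exactly $\left\lfloor\frac{3\alpha-1}{2}\right\rfloor$ rounds, which misses the hypothesis $2k\le 3m-3$ of \cref{lem:main1} by one round. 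The paper must treat this case separately (its Subcase~1.2), exploiting that the endpoint $y$ is then a better vertex to buy back the missing round; your proposal has no mechanism for this, and the generic ``check the hypotheses via the amortized count'' step would fail exactly there.

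Second, your plan for the \textbf{Process~4} termination --- reorganizing the configuration into a wavy path and citing \cref{lem:G1} --- does not go through. \cref{lem:G1} requires that the wavy path $P(i,j)$ be constructed in exactly $i+2j-1$ rounds (its number of edges) and that $i+j\le\ell-3$. The graph reached after Process~4 carries the three edges of $G_3$, an extra red edge from each Step~1 of Phase~2, and the red edges $v_iv_{i+1}$ of the final iteration, and it already has $\alpha\in\{\ell-2,\ell-1\}$ vertices; so both the round-count hypothesis and the size hypothesis of \cref{lem:G1} are violated. The paper instead finishes this branch by introducing $u_{t+1}$, exposing the forced blue edges $v_ju_{j+1}$, and then building a better or best $P_{\alpha+1}$ that is closed via \cref{lem:main2} (or closing the cycle directly), with the budget again controlled by $k_3\le k_1+k_2$ and a further split on whether $k_3=k_1+k_2$. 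Without a replacement for this argument, the Process~4 branch of your proof is open.
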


\begin{proof}
We apply the Blue Path Expansion Algorithm to $G_3$ as the initial graph, thereby extending the blue $P_2$ contained in $G_3$; the vertex labeling of $G_3$ is as in~\cref{G3}.

Suppose that during the entire expansion procedure, \textbf{Process~1} occurs $k_1$ times. Then Builder spends $k_1$ rounds and extends the blue path by $k_1$ vertices. If \textbf{Process~2} occurs $k_2$ times, then Builder spends
$k_2+3\sum_{i=1}^{k_2} t_i$ rounds and extends the blue path by
$k_2+2\sum_{i=1}^{k_2} t_i$ vertices. If \textbf{Process~3} occurs $k_3$ times, then Builder spends
$2k_3+3\sum_{i=1}^{k_3} t'_i$ rounds and extends the blue path by
$k_3+2\sum_{i=1}^{k_3} t'_i$ vertices. Finally, \textbf{Process~4} occurs $k_4\in\{0,1\}$ times.

Note that in the first execution of the algorithm one must be in \textbf{Process~1}, \textbf{Process~2}, or \textbf{Process~4}. Indeed, the endpoint $y$ of the initial blue path in $G_3$ is incident with a red edge; once $yv_1$ is colored red, the vertex $y$ becomes a better vertex and afterwards it is incident only with blue edges. Similarly, after each occurrence of \textbf{Process~3}, the new endpoint $y$ of the resulting longest blue path is again incident with a red edge, and therefore the next execution must be \textbf{Process~1}, \textbf{Process~2}, or \textbf{Process~4}. Hence,
\begin{equation}\label{equ1}
	k_3 \le k_1+k_2 .
\end{equation}
Moreover, if equality holds, then immediately before termination the algorithm must have executed \textbf{Process~3}, and consequently the endpoint $y$ of the final blue path is incident with a red edge.

For convenience, let $k:=k_1+k_2+k_3$ and
\[
M:=\sum_{i=1}^{k_2} t_i+\sum_{i=1}^{k_3} t'_i .
\]
Let $G$ denote the graph produced when the algorithm terminates.

\setcounter{case}{0}
\begin{case}
$k_4=0$.
\end{case}
\begin{figure}[htp]
		\centering
		\begin{tikzpicture}[global scale=1.2]
			\begin{scope}[line width=1pt, blue]
				\draw (1,1) to (2,1);
				\draw(2,1) to[bend left=30] (8,1);
			\end{scope}      	  		
			
			\begin{scope}[line width=1pt, red]
				\draw (1,1) to (1.5,2);
				\draw (2,1) to (1.5,2);
			\end{scope}
			
			\filldraw[fill=white] (1,1) circle (2pt);
			\filldraw[fill=white] (2,1) circle (2pt);
			\filldraw[fill=white] (1.5,2) circle (2pt);
			\filldraw[fill=white] (8,1) circle (2pt);
			
			\node at (1,0.7) {$x$};
			\node at (8,0.7) {$y$};
			\node at (1.5,2.3) {$v_0$};		
		\end{tikzpicture}
		\caption{}
		\label{k4=0}
	\end{figure}

In this case, $G$ contains a blue path $P_{\alpha-1}$ obtained by extending the blue $P_2$ in $G_3$, and the unique vertex not on this path is $v_0$. We continue to denote the endpoints of this blue $P_{\alpha-1}$ by $x$ and $y$ (see~\cref{k4=0}).

\setcounter{subcase}{0}
\begin{subcase}
$\alpha=\ell-1$, or $\alpha=\ell-2$, or $\alpha=\ell-3$ and $k_3\le k_1+k_2-1$.
\end{subcase}
Builder exposes the edge $yv_0$, which must be blue. This yields a better blue $P_\alpha$ with endpoints $x$ and $v_0$, constructed within $4+3M+k+k_3$ rounds (where the term $4$ accounts for the initial graph $G_3$ together with the edge $v_0y$). Observe that $\alpha = 3+2M+k$. Consequently,
\begin{equation}\label{equ3}
	4+3M+k+k_3=\left\lfloor \frac{3\alpha-1+2k_3-k}{2}\right\rfloor .
\end{equation}
If $\alpha=\ell-1$ or $\alpha=\ell-2$, then by~\cref{equ1,equ3} Builder constructs a better $P_\alpha$ within at most
$\left\lfloor\frac{3\alpha-1}{2}\right\rfloor$ rounds. For $\alpha=\ell-1$ we then apply~\cref{lem:main2}, and for $\alpha=\ell-2$ we apply~\cref{lem:betterpl-2}, completing the proof.
If $\alpha=\ell-3$ and $k_3\le k_1+k_2-1$, then~\cref{equ3} gives that Builder constructs a better $P_\alpha$ within at most
$\left\lfloor\frac{3\alpha-2}{2}\right\rfloor$ rounds, and the conclusion follows from~\cref{lem:main1}.

\begin{subcase}
$\alpha=\ell-3$ and $k_3=k_1+k_2$.
\end{subcase}
In this case the endpoint $y$ is incident with a red edge. Builder introduces a new vertex $w$ and exposes the edge $yw$.
\begin{itemize}
\item If $yw$ is blue, then Builder exposes $wv_0$, which must be blue. Thus, within $5+3M+k+k_3$ rounds Builder constructs a better $P_{\alpha+1}$. By~\cref{equ3}, this takes at most $\left\lfloor\frac{3\alpha+1}{2}\right\rfloor$ rounds, and applying~\cref{lem:betterpl-2} finishes the proof.

\item If $yw$ is red, then $y$ becomes a better vertex. Builder next exposes the edge $xw$.
\begin{itemize}
\item If $xw$ is blue, then Builder introduces a new vertex $z$ and exposes the edges $zv_0$ and $yz$, both of which must be blue. Hence Builder constructs a better $P_{\alpha+2}$ within $7+3M+k+k_3$ rounds, that is, within at most
$\left\lfloor\frac{3\alpha+5}{2}\right\rfloor$ rounds, and we then apply~\cref{lem:main2}.

\item If $xw$ is also red, then Builder introduces new vertices $u$ and $v$ and sequentially exposes the edges $xu$, $uv_0$, $yv$, $vw$, and $wv_0$, all of which must be blue. Consequently, within $10+3M+k+k_3$ rounds Builder constructs a blue $C_{\alpha+3}$, i.e., within at most
$\left\lfloor\frac{3\alpha+11}{2}\right\rfloor$ rounds Builder completes the construction of $C_\ell$.
\end{itemize}
\end{itemize}

\begin{case}
$k_4=1$.
\end{case}
\begin{figure}[htp]
		\centering
		\begin{tikzpicture}[global scale=1.2]
			\begin{scope}[line width=1pt, blue]
				\draw  (0,1) to (1,1);
				\draw (4,1) to (4,2);
				\draw (4,1) to (5,2);
				\draw (5,1) to (5,2);
				\draw (6,1) to (6,2);
				\draw (6,1) to (7,2);
				\draw (7,1) to (7,2);
				\draw (7,1) to (8,2);
				\draw(1,1) to[bend left=30] (3,1);
			\end{scope}      	  		
			
			\begin{scope}[line width=1pt, red]
				\draw (0,1) to (0.5,2);	\draw (1,1) to (0.5,2);
				\draw (3,1) to (4,1);
				\draw (4,1) to (5,1);
				\draw (6,1) to (7,1);
				\draw (7,1) to (8,1);
			\end{scope}
			\begin{scope}[line width=1pt, red, dotted]
				\draw (4.5,1) to (5,1);
				\draw (5,1) to (6,1);
			\end{scope}
			\filldraw[fill=white] (0.5,2) circle (2pt);
			\filldraw[fill=white] (0,1) circle (2pt);
			\filldraw[fill=white] (3,1) circle (2pt);
			\filldraw[fill=white] (4,1) circle (2pt);
			\filldraw[fill=white] (7,1) circle (2pt);
			\filldraw[fill=white] (8,1) circle (2pt);
			\filldraw[fill=white] (4,2) circle (2pt);
			\filldraw[fill=white] (5,2) circle (2pt);
			\filldraw[fill=white] (5,1) circle (2pt);
			\filldraw[fill=white] (6,2) circle (2pt);
			\filldraw[fill=white] (6,1) circle (2pt);
			\filldraw[fill=white] (7,2) circle (2pt);
			\filldraw[fill=white] (8,2) circle (2pt);
			\node at (0,0.7) {$x$};
			\node at (0.5,2.3) {$v_0$};
			\node at (3,0.7) {$y$};
			\node at (4,0.7) {$v_1$};
			\node at (7,0.7) {$v_t$};
			\node at (4,2.3) {$u_1$};
			\node at (7,2.3) {$u_{t}$};
			\node at (8,2.3) {$u_{t+1}$};
			\node at (8,0.7) {$v_{t+1}$};		
		\end{tikzpicture}
		\caption{}
		\label{k4=1}
	\end{figure}

In this case, after the algorithm terminates, Builder introduces a new vertex $u_{t+1}$ and, for each $1\le j\le t$, exposes the edge $v_ju_{j+1}$; all these edges must be blue. The resulting graph is shown schematically in~\cref{k4=1}. At this point Builder has spent $3M+k+k_3+3t+4$ rounds in total, and the graph has exactly $\alpha+1$ vertices. Note that $\alpha = 2M+k+2t+4$. Therefore,
\begin{equation}\label{equ5}
	3M+k+k_3+3t+4 \le \left\lfloor \frac{3\alpha+k_3-k_2-k_1}{2}\right\rfloor -2 .
\end{equation}

\setcounter{subcase}{0}
\begin{subcase}
$\alpha=\ell-1$, or $\alpha=\ell-2$ and $k_3\le k_1+k_2-1$.
\end{subcase}
Builder exposes the edge $v_{t+1}u_{t+1}$.
\begin{itemize}
\item If $v_{t+1}u_{t+1}$ is red, then Builder further exposes $xv_{t+1}$, $v_{t+1}u_1$, and $u_{t+1}v_0$, all of which must be blue. In this way, Builder spends $4$ additional rounds and obtains a better $P_{\alpha+1}$.

\item If $v_{t+1}u_{t+1}$ is blue, then Builder exposes $xu_1$.
\begin{itemize}
\item If $xu_1$ is also blue, then Builder exposes $v_0v_{t+1}$, which must be blue. In this case, Builder spends $3$ additional rounds and obtains a better $P_{\alpha+1}$.
\item If $xu_1$ is red, then Builder exposes $v_0u_1$ and $xv_{t+1}$, both of which must be blue. In this case, Builder spends $4$ additional rounds and obtains a better $P_{\alpha+1}$.
\end{itemize}
\end{itemize}

Summarizing, when $\alpha=\ell-1$, by~\cref{equ1,equ5} Builder constructs a better $P_{\alpha+1}$ whose endpoints are non-adjacent within at most $\left\lfloor\frac{3\alpha}{2}\right\rfloor+2$ rounds. Builder then joins the two endpoints; this edge must be blue, and hence Builder constructs $C_\ell$ within $\left\lfloor\frac{3\alpha}{2}\right\rfloor+3$ rounds.

When $\alpha=\ell-2$ and $k_3\le k_1+k_2-1$,~\cref{equ5} implies that Builder constructs a better $P_{\alpha+1}$ within at most
$\left\lfloor\frac{3\alpha-1}{2}\right\rfloor+2$ rounds, and the conclusion follows from~\cref{lem:main2}.

\begin{subcase}
$\alpha=\ell-2$ and $k_3=k_1+k_2$.
\end{subcase}
In this case $y$ becomes a better vertex. Builder exposes $v_{t+1}u_{t+1}$. If it is blue, then Builder exposes $yv_{t+1}$ and $v_0u_1$, both of which must be blue. Together with~\cref{equ5}, this shows that Builder constructs a better $P_{\alpha+1}$ within at most $\left\lfloor\frac{3\alpha}{2}\right\rfloor+1$ rounds, and we then apply~\cref{lem:main2}.
If instead $v_{t+1}u_{t+1}$ is red, then Builder exposes $xv_{t+1}$, $v_{t+1}u_1$, and $u_{t+1}v_0$, all of which must be blue. Hence Builder constructs a best $P_{\alpha+1}$ within at most $\left\lfloor\frac{3\alpha}{2}\right\rfloor+2$ rounds, and applying~\cref{lem:main2} completes the proof.
\end{proof}

We now complete the proof of the upper bound in \cref{thm:mainresult}. First, by \cref{obs:first}, Builder forces one of the configurations $G_1$, $G_2$, $G_3$, or $G_4$. If Builder obtains $G_1$, $G_2$, or $G_3$, then the desired conclusion follows from \cref{thm:G1}, \cref{thm:G2}, and \cref{thm:G3}, respectively. If instead Builder obtains $G_4$ (see \cref{G4}), then Builder introduces two new vertices $u$ and $w$, and exposes the edges $v_1u$, $uv_3$, $v_3w$, and $wv_2$, all of which must be blue. Hence Builder constructs a best $P_5$ within $7$ rounds, and applying \cref{lem:main1} completes the proof.

This finishes the proof of the upper bound. \qed

\section{Proof of the Main Lemmas}\label{sec4}

We will use as a lemma the exact value of $\tilde r(K_{1,3},P_\ell)$ established by Song, Wang, and Zhang~\cite{Song2025}.

\begin{lemma}\label{thm:Song2025}
For $\ell\ge 2$, we have $\tilde r(K_{1,3},P_\ell)=\left\lfloor \frac{3\ell}{2}\right\rfloor$.
\end{lemma}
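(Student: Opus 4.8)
The plan is to prove the matching bounds separately; this is the result of Song, Wang, and Zhang~\cite{Song2025}, and the two arguments are streamlined versions of the techniques already developed in this paper (since a blue path is exactly the structure Builder grows before closing a cycle, the path target is in a sense a substructure of the cycle target of \cref{sec2,sec3}).

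For the lower bound $\tilde r(K_{1,3},P_\ell)\ge\lfloor 3\ell/2\rfloor$, I would have Painter color each exposed edge red unless doing so would create a red $K_{1,3}$ or \emph{any} red cycle, in which case the edge is colored blue. Under this rule the red graph always has maximum degree at most $2$ and is acyclic, hence it is a disjoint union of $s$ paths; writing $X$ for the set of red-degree-$2$ vertices, the red graph has exactly $|X|+s$ edges. As in \cref{sec2}, every blue edge is forced, and a short case check shows each is either incident to a vertex of $X$ or joins the two endpoints of a single red path component. Suppose a blue $P_\ell$ were present after at most $\lfloor 3\ell/2\rfloor-1$ rounds. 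Counting red edges gives $|X|+s\le\lfloor\ell/2\rfloor$, while the $\ell-1$ path edges satisfy $\ell-1\le 2|X|+s$ (each vertex of $X$ meets at most two path edges and at most $s$ path edges are component-closing). Subtracting the two inequalities forces $s\le 1$, with $s=0$ impossible for $\ell\ge 2$; the remaining tight case $s=1$ is eliminated by the same structural observation used for $s=1$ in \cref{sec2}, namely that the forced blue edges around distinct saturated vertices, together with the single closing edge, break into vertex-disjoint pieces that cannot be assembled into one $P_\ell$. Hence Painter survives $\lfloor 3\ell/2\rfloor-1$ rounds.

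For the upper bound $\tilde r(K_{1,3},P_\ell)\le\lfloor 3\ell/2\rfloor$, I would give an adaptive Builder strategy that grows a blue path while never allowing a red $K_{1,3}$. The core routine attempts to extend the current blue path at an endpoint $v$ by exposing $va$ to a fresh vertex $a$: if Painter answers blue, one vertex is gained for one edge; if Painter answers red, Builder attaches a triangle gadget (expose a second edge at $v$ and then close a triangle), so that whatever Painter does, Builder either saturates $v$ to red-degree $2$ and thereby forces all subsequent edges at $v$ to be blue, or directly purchases a blue $P_3$ using three edges. Amortized, this routes two new path-vertices per three edges, which is precisely the ratio $3/2$; the forcing mechanism is identical to the wavy-triangle device of this paper. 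A short parity-and-endpoint analysis of the last segment then trims the total to exactly $\lfloor 3\ell/2\rfloor$.

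The main obstacle is the upper bound, specifically verifying that the amortized $3{:}2$ ratio is maintained under every Painter response without drift: one must design the gadget and its bookkeeping so that no red edge Painter plays is ever wasted (each is later reused to force a blue path-edge) and so that the integer rounding at the two ends of the path costs no extra edge. By comparison the lower bound is routine once the correct Painter rule—forbidding \emph{all} red cycles rather than only short ones—is identified, the only delicate point there being the structural elimination of the single tight configuration.
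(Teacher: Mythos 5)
This lemma is not proved in the paper at all: it is quoted verbatim from Song, Wang, and Zhang~\cite{Song2025} and used as a black box, so there is no internal proof to compare your argument against. Judged on its own merits, your attempt has a genuine flaw in the lower bound. Your Painter rule forbids \emph{all} red cycles, but the whole point of the threshold in \cref{sec2} (there, cycles of length at most $\left\lfloor\frac{\ell+1}{2}\right\rfloor$) is that Painter must be \emph{willing} to close a sufficiently long red path into a red cycle; otherwise Builder extracts a useful blue edge between the two ends of the red path. Concretely, take $\ell=6$, so your Painter must survive $\left\lfloor 3\ell/2\right\rfloor-1=8$ rounds. Builder plays $w_0w_1$, $w_1w_2$, $w_2w_3$ (all colored red, since nothing forces blue), then $aw_1$, $w_1w_3$ (blue, else $w_1$ has red degree $3$), then $w_3w_0$ (blue under your rule, since red would close the cycle $w_0w_1w_2w_3w_0$), then $w_0w_2$, $w_2b$ (blue, else $w_2$ has red degree $3$). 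The blue edges form the path $a\,w_1\,w_3\,w_0\,w_2\,b$, a blue $P_6$ in $8$ rounds, so your Painter loses a round early. This is exactly the ``tight case $s=1$'' that you claim is ``eliminated by the same structural observation used in \cref{sec2}'': that observation relies on the closing edge being colored \emph{red} (creating a long, harmless red cycle that pushes the red edge count past $\lfloor\ell/2\rfloor$), which your rule disallows. The fix is to forbid only red cycles of length at most $\lfloor \ell/2\rfloor$; then either the red graph exceeds $\lfloor\ell/2\rfloor$ edges and the blue budget is too small, or it is a union of paths and your counting goes through, with the closing edge of a maximal red path absorbed as red.

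Your upper-bound sketch points in the right direction (the wavy-triangle device does yield two new path vertices per three edges), but as written the gadget accounting does not close: if Painter answers the first probe $va$ red and the second probe at $v$ red as well, you have spent two edges saturating $v$ and still need a third to gain one vertex, which is ratio $3{:}1$, not $3{:}2$. The actual strategy recovers the ratio by making the red answers themselves form the path $v_1v_2\cdots$ so that each red edge saturates \emph{two} vertices $v_i,v_{i+1}$ at once and each saturated $v_i$ then buys a pendant $u_i$ with forced blue edges $v_{i-1}u_i$ wait-free; this bookkeeping, and the endpoint/parity cleanup you defer to ``a short analysis,'' are precisely the content of~\cite{Song2025} and cannot be waved away.
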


Next we prove all lemmas stated in Section~\ref{sec3}. For each lemma, we first recall its statement and then provide the proof.

\begin{customlemma}{\ref{lem:main1}}
Let $m\ge 2$ and $\ell-m\ge 2$. Suppose Builder has constructed a blue $P_m$ within $k$ rounds and that one of the following holds:
\begin{itemize}
	\item $P_m$ is a \emph{better} path and $2k\le 3m-3$;
	\item $P_m$ is a \emph{best} path and $2k\le 3m-1$.
\end{itemize}
Then Builder can force a blue $C_\ell$ within a total of $N$ rounds.
\end{customlemma}
\begin{proof}
By \cref{thm:Song2025}, Builder can force a blue path $P_{\ell-m}$ that is vertex-disjoint from the existing blue path $P_m$ within $\left\lfloor \frac{3(\ell-m)}{2}\right\rfloor$ additional rounds. Let $u$ and $v$ be the endpoints of $P_m$, and let $w$ and $x$ be the endpoints of $P_{\ell-m}$.

First assume that $P_m$ is a better path. Let $u$ be the good endpoint and $v$ the better endpoint. Builder exposes the edges $uw$ and $ux$, at least one of which must be blue; without loss of generality, suppose that $uw$ is blue. Builder then exposes the edge $vx$, which must be blue. Consequently, Builder creates a blue $C_\ell$ within $\left\lfloor \frac{3(\ell-m)}{2}\right\rfloor + k + 3 \le N$ rounds in total.

Now assume that $P_m$ is a best path, so both $u$ and $v$ are better vertices. Builder exposes the edges $uw$ and $vx$, both of which must be blue. Hence Builder creates a blue $C_\ell$ within $\left\lfloor \frac{3(\ell-m)}{2}\right\rfloor + k + 2 \le N$ rounds in total.
\end{proof}

\begin{customlemma}{\ref{lem:main2}}
Suppose Builder has constructed a blue $P_{\ell-1}$ within $k$ rounds.
\begin{itemize}
	\item If $P_{\ell-1}$ is a better path, then Builder can force a blue $C_\ell$ within a total of $k+3$ rounds.
	\item If $P_{\ell-1}$ is a best path, then Builder can force a blue $C_\ell$ within a total of $k+2$ rounds.
\end{itemize}
\end{customlemma}
\begin{proof}
Let $u$ and $v$ be the endpoints of the blue $P_{\ell-1}$ constructed within $k$ rounds.

First assume that $P_{\ell-1}$ is a better path. Let $u$ be the good endpoint and $v$ the better endpoint. Builder introduces two new vertices $w$ and $z$, and exposes the edges $uw$ and $uz$. Since $u$ is good, at least one of these two edges must be blue; without loss of generality, assume that $uw$ is blue. Builder then exposes the edge $vw$, which must be blue because $v$ is a better vertex. This yields a blue $C_\ell$ within $k+3$ rounds in total.

Now assume that $P_{\ell-1}$ is a best path, so both $u$ and $v$ are better vertices. Builder introduces a new vertex $w$ and exposes the edges $uw$ and $vw$, both of which must be colored blue. Hence Builder creates a blue $C_\ell$ within $k+2$ rounds in total.
\end{proof}

\begin{customlemma}{\ref{lem:betterpl-2}}
	For $\ell\ge 4$, if Builder has already constructed a better $P_{\ell-2}$, then Builder can create a blue $C_{\ell}$ within five additional rounds.
\end{customlemma}
\begin{proof}
Let the endpoints of the better $P_{\ell-2}$ be $u$ and $v$, where $u$ is the better endpoint. Builder introduces a new vertex $w$ and exposes the edge $vw$.

If $vw$ is blue, then Builder introduces three new vertices $x,y,z$ and exposes the edges $wx$, $wy$, and $wz$, among which at least one must be blue. Without loss of generality, assume that $wx$ is blue. Builder then exposes the edge $ux$, which must be blue since $u$ is a better vertex. Thus the cycle $xuP_{\ell-2}vwx$ is a blue $C_{\ell}$.

If $vw$ is red, then Builder introduces a new vertex $x$ and exposes the edge $wx$. If $wx$ is blue, then Builder exposes the edges $vx$ and $wu$, both of which must be blue, and hence $xwuP_{\ell-2}vxw$ forms a blue $C_{\ell}$. If instead $wx$ is red, then Builder introduces a new vertex $y$ and exposes the edges $wy$, $yv$, and $wu$, all of which must be blue, yielding the blue cycle $ywuP_{\ell-2}vwy$.
\end{proof}

\begin{customlemma}{\ref{lem:type}}
Assume that $k$ and $m$ satisfy $2k\le 3m-5$ and $\ell-m\ge 6$.
If Builder constructs a Type~1 configuration within $k+4$ rounds, or a Type~2 configuration within $k+2$ rounds, or a Type~3 configuration within $k+3$ rounds, then in each case Builder can force a blue $C_\ell$ within a total of $N$ rounds.
\end{customlemma}
\begin{proof}
Suppose first that Builder has obtained a Type~1 configuration. Builder exposes the edge $zv_1$. If $zv_1$ is blue, then within $k+5$ rounds Builder has constructed a better $P_{m+3}$. If $zv_1$ is red, then Builder introduces a new vertex $u$ and exposes the edges $yu$ and $zu$, both of which must be blue. In this case, within $k+7$ rounds Builder has constructed a better $P_{m+4}$. In either outcome, \cref{lem:main1} applies and yields the desired conclusion.

Next assume that Builder has obtained a Type~2 configuration. Builder first exposes the edge $xv_1$, and we distinguish two cases.
\begin{enumerate}
	\item If $xv_1$ is blue, then Builder next exposes the edge $xy$.
	\begin{itemize}
		\item If $xy$ is blue, then within $k+4$ rounds Builder obtains a better $P_{m+2}$.
		\item If $xy$ is red, then Builder introduces a new vertex $z$ and exposes the edges $yz$ and $zx$, both of which must be blue. Thus, within $k+6$ rounds Builder obtains a best $P_{m+3}$.
	\end{itemize}
	\item If $xv_1$ is red, then Builder next exposes the edge $yv_1$.
	\begin{itemize}
		\item If $yv_1$ is blue, then Builder additionally exposes the edge $xy$, which must be blue, and hence within $k+5$ rounds Builder constructs a best $P_{m+2}$.
		\item If $yv_1$ is red, then Builder introduces new vertices $w$ and $u$, and exposes the edges $v_mw$, $wx$, $xu$, and $uy$, all of which must be blue. Consequently, within $k+8$ rounds Builder constructs a best $P_{m+4}$.
	\end{itemize}
\end{enumerate}
In all subcases above, the resulting better/best path satisfies the assumptions of \cref{lem:main1}, which completes the proof for Type~2.

Finally, suppose that Builder has obtained a Type~3 configuration. Builder exposes the edge $yv_1$, which must be blue. Hence within $k+4$ rounds Builder constructs a better $P_{m+2}$, and the conclusion follows from \cref{lem:main1}.
\end{proof}

Next we prove the more involved \cref{lem:G1}. We first establish two auxiliary lemmas that will serve as the main tools. Starting from an already constructed wavy path, we will use an iterative procedure to obtain a longer wavy path in relatively few additional rounds, thereby saving enough rounds for the final construction of a blue cycle.

\begin{lemma}\label{lem:wavy}
Assume that $i\ge 4$. If a wavy path $P(i,j)$ satisfies that both $v_1v_2$ and $v_{i-1}v_i$ are blue, then $i\ge 2j+2$.
\end{lemma}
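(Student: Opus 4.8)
The plan is to reduce the statement to a counting argument on the $v$-vertices, after pinning down the local structure of a wavy path directly from the definition of $P(i,j)$. First I would record the following structural facts. Since each wavy point is the middle vertex of the $P_3$ forming its wavy triangle, it has two neighbours on the underlying blue path and hence is never an endpoint of that path; consequently both endpoints of the blue path are $v$-vertices, which under the labelling convention must be $v_1$ and $v_i$. Moreover, for each wavy point $u_k$ the two endpoints of its $P_3$ are $v$-vertices with no other $v$-vertex between them on the path (the three vertices of the $P_3$ are consecutive), so these two endpoints are consecutive in the $v$-ordering, say $v_t$ and $v_{t+1}$. Thus every wavy triangle has the form $v_t u_k v_{t+1}$, and it contributes the red edge $v_t v_{t+1}$.

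Next I would call the pair $(v_t, v_{t+1})$ a \emph{gap} (for $1\le t\le i-1$) and say the gap is \emph{filled} if it contains a wavy point. By the previous paragraph there are exactly $j$ filled gaps, and the wavy triangle of a filled gap $t$ uses the $v$-vertices $v_t$ and $v_{t+1}$. Because the wavy triangles are pairwise vertex-disjoint, the $j$ pairs $\{v_t,v_{t+1}\}$ arising from the filled gaps are pairwise disjoint, so altogether they consume $2j$ distinct $v$-vertices.

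The key translation step is to read the colour hypotheses into statements about $v_1$ and $v_i$. If gap $1$ were filled, then $v_1v_2$ would be precisely the red edge of the corresponding wavy triangle; hence the assumption that $v_1v_2$ is blue forces gap $1$ to be unfilled, and in particular $v_1$ belongs to no wavy triangle. Symmetrically, $v_{i-1}v_i$ being blue forces gap $i-1$ to be unfilled, so $v_i$ belongs to no wavy triangle. Since $i\ge 4$ we have $v_1\ne v_i$, and neither $v_1$ nor $v_i$ lies among the $2j$ $v$-vertices consumed by the wavy triangles. Counting distinct $v$-vertices then gives $i\ge 2j+2$, as claimed.

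The routine part is this final count; the part needing care is the structural setup, namely verifying that wavy points sit strictly between \emph{consecutive} $v$-vertices and that the filled-gap pairs are genuinely disjoint, since the whole argument hinges on correctly converting vertex-disjointness of the wavy triangles into disjointness of the pairs $\{v_t,v_{t+1}\}$. Once that is established, translating ``end-edge blue'' into ``$v_1$ (resp.\ $v_i$) is unused'' and carrying out the count are both immediate.
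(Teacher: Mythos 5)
Your proposal is correct and follows essentially the same argument as the paper: the $j$ pairwise vertex-disjoint wavy triangles consume $2j$ distinct non-wavy ($v$-) vertices, the blueness of $v_1v_2$ and $v_{i-1}v_i$ excludes $v_1$ and $v_i$ from that set, and counting gives $i\ge 2j+2$. Your extra structural verification (that each wavy triangle has the form $v_tu_kv_{t+1}$ with consecutive $v$-vertices) is a more careful spelling-out of what the paper states in one line, but the underlying idea is identical.
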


\begin{proof}
Each wavy triangle contains exactly one wavy point and two non-wavy vertices. Hence $P(i,j)$ has exactly $j$ wavy points, and exactly $2j$ non-wavy vertices that lie in wavy triangles.

Note that $v_1$ and $v_i$ are the two endpoints of the underlying blue path $P_{i+j}$ in $P(i,j)$, and by assumption the edges $v_1v_2$ and $v_{i-1}v_i$ are blue. Therefore, neither $v_1$ nor $v_i$ can serve as a non-wavy vertex of any wavy triangle in $P(i,j)$. Consequently, the total number of vertices $i+j$ satisfies $i+j \ge (2j)+j+2$, and thus $i\ge 2j+2$.
\end{proof}

\begin{lemma}\label{lem:wavyG1}
Let $i+j\le \ell-3$ and $i\ge 4$. Suppose Builder has constructed a copy of $P(i,j)$ within $i+2j-1$ rounds and that the edge $v_1v_2$ is blue. Then Builder can either construct a copy of $P(i+1,j)$ within a total of $i+2j$ rounds, or construct a copy of $P(i+1,j+1)$ within a total of $i+2j+2$ rounds, or else force a blue $C_\ell$ within a total of $N$ rounds.
\end{lemma}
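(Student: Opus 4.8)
The plan is to extend the wavy path at the endpoint $v_1$, which the hypothesis that $v_1v_2$ is blue guarantees lies in no wavy triangle, i.e.\ $v_1$ is not a base vertex and has red degree $0$. Builder introduces a new vertex $v_0$ and exposes $v_0v_1$. If this edge is blue, then the underlying blue path has grown by one vertex while the number of wavy triangles is unchanged, so Builder has produced $P(i+1,j)$ in one extra round, which is the first alternative. If $v_0v_1$ is red, Builder tries to complete a fresh wavy triangle at $v_1$ by exposing $v_1q$ and then $qv_0$ (with $q$ new). When both are blue, the triangle on $v_0,q,v_1$ with red base $v_0v_1$ and wavy point $q$ is vertex-disjoint from every existing wavy triangle---precisely because $v_1$ was not a base vertex, so the ``overlapping triangle'' obstruction cannot occur---and the result is a genuine $P(i+1,j+1)$ built in three extra rounds, the second alternative. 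Extending at the guaranteed non-base endpoint $v_1$ is the crucial choice that keeps these two outcomes clean.

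It remains to treat the colourings in which $v_1q$ or $qv_0$ is red, and here I would lean on the standing assumption that no red $K_{1,3}$ ever appears: every vertex has red degree at most $2$, so once a vertex becomes \emph{better} (red degree $2$) every further edge incident to it is forced blue. In each red branch the new red edges assemble a \emph{Type 1/2/3} gadget sitting on the blue main path $v_1\cdots v_i$: if $v_1q$ is blue and $qv_0$ is red one gets a Type~3 gadget with better vertex $v_0$, and if $v_1q$ is red one gets a Type~2 gadget with better vertex $v_1$. Forcing one more blue edge at the better vertex (for instance $v_0v_i$) then exposes a blue better path or best path whose order exceeds $i+j$, after which \cref{lem:type} (whenever $\ell-m\ge 6$), or else \cref{lem:main1}, \cref{lem:main2}, or \cref{lem:betterpl-2} directly, forces a blue $C_\ell$ within $N$ rounds; this is the third alternative.

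The heart of the matter is the round accounting in these last branches, because the wavy path was already built edge-minimally, in exactly $i+2j-1$ rounds, so every additional round is scarce. The key elementary inequality is that $i\ge 2j+1$ (from the definition of a wavy path) together with $i\ge 4$ forces $i\ge j+3$, which is exactly the margin that the hypotheses $3m-2k\ge 5$ of \cref{lem:type} and $2k\le 3m-3$ of \cref{lem:main1} require once the gadget has been read off. The genuinely delicate case is the ``double red'' branch, in which Painter colours both $v_0v_1$ and $v_1q$ red so that $v_1$ becomes better while the blue path does \emph{not} lengthen; \cref{lem:type} absorbs this whenever $\ell-m\ge 6$, i.e.\ $i+j\le\ell-6$.

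The remaining boundary range $i+j\in\{\ell-5,\ell-4,\ell-3\}$ is exactly where the hypothesis $\ell\ge 13$ is spent. There $i+j$ is so large that $i\ge(2\ell-5)/3$, and combined with $j\le(\ell-4)/3$ this yields $i\ge j+4$ when $\ell\ge 13$; the blue path is then within a few vertices of $C_\ell$ and has two better (or good-plus-better) endpoints, so a short explicit finish---connecting the endpoints through two or three new vertices, all forced blue since the endpoints are better---closes the cycle inside $N$ rounds, the bound $2j+7\le\ell$ needed for this again being guaranteed by $\ell\ge 13$. I expect this boundary bookkeeping, rather than any single clever move, to be the main obstacle, and I would organise the write-up by first disposing of every branch with $\ell-m\ge 6$ uniformly through \cref{lem:type}, and then handling the three boundary values of $i+j$ by the direct closing argument above.
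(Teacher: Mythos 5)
Your proposal extends the wavy path at $v_1$, whereas the paper extends at the other endpoint $v_i$; this is not a cosmetic difference, and it is where your argument breaks. First, your second alternative can fail to produce a legitimate $P(i+1,j+1)$. The definition of a wavy path requires the first parameter to be at least twice the second plus one, so $P(i+1,j+1)$ needs $i\ge 2j+2$. Your hypotheses only guarantee $i\ge 2j+1$ (e.g.\ $v_1v_2u_1v_3v_4u_2v_5$ with red edges $v_2v_3$ and $v_4v_5$ satisfies $i=5$, $j=2$, $i\ge 4$, $v_1v_2$ blue), and in that case attaching a fresh wavy triangle at $v_1$ yields a structure with $2(j+1)$ non-wavy vertices, which is not a $P(i+1,j+1)$ and for which the counting $j\le\lfloor(\alpha-c)/3\rfloor$ used downstream no longer holds. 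The paper sidesteps this precisely by building the new triangle at $v_i$ \emph{only} in the sub-branch where $v_{i-1}v_i$ is blue, so that \cref{lem:wavy} (which needs \emph{both} end edges blue) delivers $i\ge 2j+2$. Second, even when your $P(i+1,j+1)$ is legitimate, its new endpoint $v_0$ is a base vertex: after relabeling, the edge $v_1v_2$ of the new wavy path is \emph{red}, and the untouched end $v_i$ need not be clean either. Since \cref{lem:G1} works by iterating \cref{lem:wavyG1}, and the hypothesis ``$v_1v_2$ is blue'' is exactly the invariant the iteration needs, your construction destroys the only guaranteed clean end after one step and the procedure stalls. The paper's choice of always extending at $v_i$ keeps the $v_1$ end (and its blue edge $v_1v_2$) intact as a standing invariant.

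There is also a substantive gap in your boundary analysis. For $i+j\le\ell-6$ your reduction to \cref{lem:type} via Type~2/Type~3 gadgets at $v_1$ is fine (and pleasantly avoids Type~1, since $v_1$ is clean), but the range $i+j\in\{\ell-5,\ell-4,\ell-3\}$ is where most of the paper's proof actually lives: a case analysis over the gadget type and over $\alpha\in\{\ell-1,\ell-2,\ell-3\}$, each branch ending in an explicit better/best path fed to \cref{lem:main1}, \cref{lem:main2}, or \cref{lem:betterpl-2}, or in an explicit cycle. Your sketch asserts that ``the blue path has two better (or good-plus-better) endpoints,'' but in your Type~2/Type~3 branches the far endpoint $v_i$ of the blue main path is good only if $v_{i-1}v_i$ is red; if it is blue, $v_i$ has red degree zero, no edge at $v_i$ is forced, and the ``connect the endpoints through new vertices, all forced blue'' finish does not go through at that end. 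Closing this requires the kind of endpoint-by-endpoint probing (exposing $v_1v_{i+1}$, $v_1u_{j+1}$, etc., and branching on Painter's answers) that the paper carries out in its Case~1; it is not a routine bookkeeping step.
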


\begin{proof}
Builder has constructed $P(i,j)$ within $i+2j-1$ rounds. Builder now introduces a new vertex $v_{i+1}$ and exposes the edge $v_iv_{i+1}$.

If $v_iv_{i+1}$ is blue, then $P(i,j)\cup\{v_iv_{i+1}\}$ forms a copy of $P(i+1,j)$. Since $i\ge 2j+1$, we have $i+1\ge 2j+1$, and thus Builder completes $P(i+1,j)$ within $i+2j$ rounds.

Assume next that $v_iv_{i+1}$ is red and $v_{i-1}v_i$ is also red. Then $v_{i-1}u_jv_iv_{i-1}$ forms a wavy triangle in $P(i,j)$. Hence, within $i+2j$ rounds Builder has obtained a Type~1 configuration. Moreover, $P(i,j)\setminus\{u_j,v_i,v_{i+1}\}$ is a copy of $P(i-1,j-1)$, and from $i\ge 2j+1$ it follows that $i-1\ge 2(j-1)+1$. In particular, this Type~1 configuration has at most $i+j+1\le \ell-2$ vertices, and its blue main path corresponds to the blue path $P_{i+j-2}$ inside $P(i-1,j-1)$.

Finally, assume that $v_iv_{i+1}$ is red while $v_{i-1}v_i$ is blue. Builder then introduces a new vertex $u_{j+1}$ and exposes the edge $v_iu_{j+1}$. If this edge is red, then within $i+2j+1$ rounds Builder has obtained a Type~2 configuration whose blue main path is the blue $P_{i+j}$ inside $P(i,j)$. If instead $v_iu_{j+1}$ is blue, then Builder exposes the edge $v_{i+1}u_{j+1}$. If $v_{i+1}u_{j+1}$ is red, then within $i+2j+2$ rounds Builder has obtained a Type~3 configuration whose blue main path is again the blue $P_{i+j}$ inside $P(i,j)$. If $v_{i+1}u_{j+1}$ is blue, then Builder has constructed $P(i+1,j+1)$. Indeed, since both $v_{i-1}v_i$ and $v_1v_2$ are blue, \cref{lem:wavy} implies $i\ge 2j+2$, so $i+1\ge 2(j+1)+1$, and thus Builder completes $P(i+1,j+1)$ within $i+2j+2$ rounds.

It remains to show that in each of the above outcomes where a Type~$\beta$ configuration is formed (with $\beta\in\{1,2,3\}$), Builder can force a blue $C_\ell$ within a total of $N$ rounds. Throughout, the labeling of $P(i,j)$ follows our standardized convention. Let $\alpha$ denote the number of vertices of the obtained Type~$\beta$. Since Type~$\beta$ is obtained from $P(i,j)$ by adding one or two new vertices and $i\ge 4$, we have $6\le \alpha \le \ell-1$. Moreover, in the construction of Type~1 we necessarily have $j\ge 1$.

\setcounter{subcase}{0}
\setcounter{case}{0}
\begin{case}
$\alpha\in\{\ell-1,\ell-2,\ell-3\}$.
\end{case}

\begin{subcase}
Type~1.
\end{subcase}
In Type~1, Builder adds the vertex $v_{i+1}$ to $P(i,j)$, and hence $\alpha\neq \ell-1$. Since $i+j+1=\alpha$ and $i\ge 2j+1$, we have $j\le \left\lfloor\frac{\alpha-2}{3}\right\rfloor$. Builder exposes the edge $v_1v_{i+1}$. If $v_1v_{i+1}$ is blue, then Builder constructs a better $P_\alpha$ within $i+2j+1=\alpha+j\le \left\lfloor\frac{4\alpha-2}{3}\right\rfloor$ rounds. For $\alpha=\ell-2$ or $\ell-3$, we may apply~\cref{lem:main1} to finish. If instead $v_1v_{i+1}$ is red, then Builder introduces a new vertex $u$ and exposes the edges $v_iu$ and $uv_{i+1}$, both of which must be blue. Thus Builder constructs a better $P_{\alpha+1}$ within $i+2j+3 \le \left\lfloor\frac{4\alpha+4}{3}\right\rfloor$ rounds. When $\alpha=\ell-3$ we apply~\cref{lem:main1}, and when $\alpha=\ell-2$ we apply~\cref{lem:main2}, completing the proof in both cases.

\begin{subcase}
Type~2.
\end{subcase}
In Type~2, Builder adds the vertices $v_{i+1}$ and $u_{j+1}$ to $P(i,j)$, so $i+j+2=\alpha$. From $i\ge 2j+1$ we obtain $j\le \left\lfloor\frac{\alpha-3}{3}\right\rfloor$. Builder exposes the edge $v_1u_{j+1}$.
\begin{itemize}
\item Suppose that $v_1u_{j+1}$ is red.
\begin{itemize}
\item If $\alpha=\ell-1$, then Builder introduces a new vertex $w$ and exposes the edges $v_iw$ and $wu_{j+1}$, both of which must be blue. Hence Builder constructs a better $P_\alpha$, namely $v_1P_{i+j}v_iwu_{j+1}$, within $i+2j+4=\alpha+j+2\le \left\lfloor\frac{4\alpha+3}{3}\right\rfloor$ rounds, and applying~\cref{lem:main2} yields the conclusion.

\item If $\alpha=\ell-2$, Builder exposes the edge $v_1v_{i+1}$. If $v_1v_{i+1}$ is blue, then Builder exposes $u_{j+1}v_{i+1}$, which must be blue, and thus constructs a best $P_\alpha$ within
$i+2j+4\le \left\lfloor\frac{4\alpha+3}{3}\right\rfloor$ rounds; \cref{lem:main1} then applies. If $v_1v_{i+1}$ is red, then Builder introduces new vertices $u$ and $v$ and exposes the edges
$v_1u$, $uu_{j+1}$, $u_{j+1}v_{i+1}$, $v_{i+1}v$, and $vv_i$, all of which are blue. Consequently, Builder constructs a blue $C_\ell$ within $i+2j+8 \le \left\lfloor\frac{4\ell+7}{3}\right\rfloor \le N$ rounds.

\item If $\alpha=\ell-3$, Builder again exposes $v_1v_{i+1}$. If $v_1v_{i+1}$ is blue, then exposing $u_{j+1}v_{i+1}$ (necessarily blue) yields a best $P_\alpha$ within
$i+2j+4\le \left\lfloor\frac{4\alpha+3}{3}\right\rfloor$ rounds, and \cref{lem:main1} applies. If $v_1v_{i+1}$ is red, then Builder introduces new vertices $u,v,w$ and exposes the edges
$v_1u$, $uu_{j+1}$, $u_{j+1}w$, $wv_{i+1}$, $v_{i+1}v$, and $vv_i$, all of which are blue. Thus Builder constructs a blue $C_\ell$ within $i+2j+9 \le \left\lfloor\frac{4\ell+6}{3}\right\rfloor$ rounds.
\end{itemize}

\item Suppose that $v_1u_{j+1}$ is blue. Builder then exposes the edge $u_{j+1}v_{i+1}$.
\begin{itemize}
\item If $u_{j+1}v_{i+1}$ is blue, then within $i+2j+3=\alpha+j+1\le \left\lfloor\frac{4\alpha}{3}\right\rfloor$ rounds Builder constructs a better $P_\alpha$. For $\alpha=\ell-2$ or $\ell-3$ we apply~\cref{lem:main1}, and for $\alpha=\ell-1$ we apply~\cref{lem:main2}.

\item If $u_{j+1}v_{i+1}$ is red, then Builder introduces a new vertex $x$ and exposes the edges $u_{j+1}x$ and $v_{i+1}x$, both of which must be blue. Hence Builder constructs a best $P_{\alpha+1}$ within $i+2j+5=\alpha+j+3\le \left\lfloor\frac{4\alpha+6}{3}\right\rfloor$ rounds. When $\alpha=\ell-3$ we apply~\cref{lem:main1}, and when $\alpha=\ell-2$ we apply~\cref{lem:main2}. When $\alpha=\ell-1$, Builder exposes the edges $v_1v_i$ and $v_2v_{i+1}$, both of which are blue, and thus constructs a blue $C_\ell$ within $\left\lfloor\frac{4\ell+8}{3}\right\rfloor \le N$ rounds, namely $v_1v_iP_{i+j-1}v_2v_{i+1}xu_{j+1}v_1$.
\end{itemize}
\end{itemize}

\begin{subcase}
Type~3.
\end{subcase}
In Type~3, Builder adds the vertices $v_{i+1}$ and $u_{j+1}$ to $P(i,j)$, so $i+j+2=\alpha$. Since $i\ge 2j+1$, we have $j\le \left\lfloor\frac{\alpha-3}{3}\right\rfloor$. Builder exposes the edge $v_1v_{i+1}$, which must be blue. Therefore Builder constructs a better $P_\alpha$ within $i+2j+2=\alpha+j\le \left\lfloor\frac{4\alpha-3}{3}\right\rfloor$ rounds. For $\alpha=\ell-2$ or $\ell-3$ we apply~\cref{lem:main1}, and for $\alpha=\ell-1$ we apply~\cref{lem:main2}, completing the proof.

\begin{case}
$\alpha\le \ell-4$.
\end{case}
For Type~2 and Type~3, the blue main path is the blue $P_{i+j}$ contained in $P(i,j)$, which was built within $i+2j-1$ rounds. Here $\alpha=i+j+2\le \ell-4$, and thus $\ell-(i+j)\ge 6$. A direct calculation gives
\[
2(i+2j-1)-3(i+j)+5 = j-i+3 \le 2-j .
\]
Since $i\ge 4$, it follows in all cases (whether $j=0$, $j=1$, or $j\ge 2$) that
\[
2(i+2j-1)\le 3(i+j)-5.
\]

For Type~1, the blue main path is the blue $P_{i+j-2}$ contained in $P(i-1,j-1)$, which was built within $i+2j-3$ rounds. Here $\alpha=i+j+1\le \ell-4$, and hence $\ell-(i-1+j-1)\ge 6$. Similarly,
\[
2\bigl((i-1)+2(j-1)-1\bigr)\le 3\bigl((i-1)+(j-1)\bigr)-5.
\]

Therefore, in all cases above the hypotheses of~\cref{lem:type} are satisfied, and Builder can force a blue $C_\ell$ within $N$ rounds.
\end{proof}

\begin{customlemma}{\ref{lem:G1}}
Let $i+j\le \ell-3$ and $i\ge 4$. If Builder has constructed a copy of $P(i,j)$ within $i+2j-1$ rounds and the edge $v_1v_2$ is blue, then Builder can force a blue $C_\ell$ within a total of $N$ rounds.
\end{customlemma}

\begin{proof}
Starting from $P(i,j)$, we repeatedly apply~\cref{lem:wavyG1}. During this procedure, either Builder already forces a blue $C_\ell$ within $N$ rounds (and we are done), or else Builder keeps obtaining longer wavy paths, eventually reaching a wavy path $P(m,n)$ with $m+n=\ell-2$ or $m+n=\ell-1$. Note that in each application of~\cref{lem:wavyG1}, whenever a new wavy path $P(i,j)$ is obtained, the total number of rounds used so far is exactly the number of edges of $P(i,j)$. We now consider the two possible terminal cases.

\setcounter{case}{0}
\begin{case}
$m+n=\ell-2$.
\end{case}
In this case, $n\le \left\lfloor\frac{\ell-3}{3}\right\rfloor$. Builder introduces a new vertex $v_{m+1}$ and exposes the edge $v_mv_{m+1}$. If $v_mv_{m+1}$ is blue, then we immediately obtain a wavy path on $\ell-1$ vertices, which we treat in the next case. So assume that $v_mv_{m+1}$ is red. Builder then exposes the edge $v_1v_{m+1}$.
\begin{itemize}
	\item If $v_1v_{m+1}$ is red, then Builder exposes $v_2v_{m+1}$, which must be blue. Hence Builder constructs a better $P_{\ell-2}$, namely $v_mP_{m+n-1}v_2v_{m+1}$, within $m+2n+2 \le \left\lfloor\frac{4\ell-3}{3}\right\rfloor$ rounds. Applying~\cref{lem:betterpl-2} completes the proof.
	\item If $v_1v_{m+1}$ is blue, then Builder introduces new vertices $u,v,w$ and exposes the edges $v_{m+1}u$, $v_{m+1}v$, and $v_{m+1}w$, among which at least two are blue. Without loss of generality, assume that $v_{m+1}u$ and $v_{m+1}v$ are blue. Builder then exposes $v_mu$ and $v_mv$, at least one of which must be blue; without loss of generality, assume that $v_mu$ is blue. Then $uv_{m+1}v_1P_{m+n}v_mu$ is a blue $C_\ell$. The total number of rounds is at most $\left\lfloor\frac{4\ell+9}{3}\right\rfloor \le N$, as required.
\end{itemize}

\begin{case}
$m+n=\ell-1$.
\end{case}
In this case, $n\le \left\lfloor\frac{\ell-2}{3}\right\rfloor$. Builder introduces four new vertices $x,w,y,z$ and exposes the edges $v_1x$, $v_1w$, $v_1y$, and $v_1z$. We distinguish the following cases.
\begin{itemize}
	\item Suppose that exactly two of these four edges are blue. Without loss of generality, assume that $v_1x$ and $v_1w$ are blue. Then $v_1$ becomes a better vertex. Next Builder introduces a new vertex $u$ and exposes the edge $v_mu$.
	\begin{itemize}
		\item If $v_mu$ is blue, then Builder exposes $v_1u$, which must be blue, and hence constructs a blue $C_\ell$ within $m+2n+5$ rounds.
		\item If $v_mu$ is red, then Builder exposes the edges $v_mx$ and $v_mw$. At least one of them must be blue; without loss of generality assume that $v_mw$ is blue. Then Builder constructs a blue $C_\ell$ within $m+2n+6$ rounds.
	\end{itemize}

	\item Suppose that at least three of the edges $v_1x$, $v_1w$, $v_1y$, $v_1z$ are blue. Without loss of generality, assume that $v_1x$, $v_1w$, and $v_1y$ are blue. Builder then exposes the edges $v_mx$, $v_mw$, and $v_my$, among which at least one must be blue; without loss of generality assume that $v_mx$ is blue. This yields a blue $C_\ell$ within $m+2n+6$ rounds.
\end{itemize}
In all cases above, Builder constructs a blue $C_\ell$ within at most
\[
m+2n+6 \le \left\lfloor\frac{4\ell+13}{3}\right\rfloor \le N
\]
rounds, completing the proof.
\end{proof}

\section*{Acknowledgements}

Y. Zhang was partially supported by the National Natural Science Foundation of China (NSFC) under Grant No. 11601527 and by the Natural Science Foundation of Hebei Province under Grant No. A2023205045. H. Zhi was supported by the Graduate Innovation Fund of Hebei Normal University under Grant No. ycxzzbs202605.

\section*{Data Availability Statement}

No data was used or generated in this research.

\section*{Conflict of interest}

The authors declare that they have no known competing financial interests or personal relationships that could have appeared to influence the work reported in this paper.

\end{document}